\let\accentvec\vec
\let\vec\accentvec
\newcommand{\mouseM}{\mathcal{M}}
\newcommand{\cpmR}{S}
\newcommand{\construction}{\mathbb{D}}
\newcommand{\conlength}{\Lambda}
\newcommand{\conlengthabstract}{\Gamma}
\newcommand{\com}{\circ}
\newcommand{\Her}{\mathcal{H}}
\newcommand{\es}{\mathbb{E}}
\newcommand{\CC}{\mathbb C}
\newcommand{\RR}{\mathbb R}
\newcommand{\sub}{\subseteq}
\newcommand{\inter}{\cap}
\newcommand{\om}{\omega}
\newcommand{\pow}{\mathcal{P}}
\newcommand{\Tt}{\mathcal{T}}
\newcommand{\Uu}{\mathcal{U}}
\newcommand{\cut}{\backslash}
\newcommand{\ins}{\trianglelefteq}
\newcommand{\pins}{\triangleleft}
\newcommand{\crit}{\mathrm{crit}}
\newcommand{\rest}{\!\upharpoonright\!}
\newcommand{\lh}{\mathrm{lh}}
\newcommand{\Ult}{\mathrm{Ult}}
\newcommand{\sats}{\models}
\newcommand{\J}{\mathcal{J}}
\newcommand{\ZFC}{\mathsf{ZFC}}
\newcommand{\eps}{\varepsilon}
\newcommand{\Ttvec}{{\vec{\Tt}}}
\newcommand{\muvec}{\vec{\mu}}
\newcommand{\core}{\mathfrak{C}}
\newcommand{\her}{\mathcal{H}}
\newcommand{\pred}{\mathrm{-pred}}
\newcommand{\un}{\cup}
\newcommand{\id}{\mathrm{id}}
\newcommand{\conc}{\ \widehat{\ }\ }
\newcommand{\vect}{\vec}
\newcommand{\lex}{\text{lex}}
\newcommand{\unsq}{\text{unsq}}
\newcommand{\rg}{\text{range}}
\newcommand{\Pp}{\mathcal{P}}
\newcommand{\Ee}{\mathcal{E}}
\newcommand{\rhovec}{\vect{\rho}}
\newcommand{\sigmavec}{\vect{\sigma}}
\newcommand{\OR}{\mathsf{ord}}
\renewcommand{\Ttvec}{\vect{\Tt}}
\renewcommand{\muvec}{\vect{\mu}}
\newcommand{\I}{\mathcal{I}}
\spnewtheorem{thm}{Theorem}[section]{\bf}{\rm}
\spnewtheorem{dfn}[thm]{Definition}{\bf}{\rm}
\spnewtheorem{prop}[thm]{Proposition}{\bf}{\rm}
\spnewtheorem{cor}[thm]{Corollary}{\bf}{\rm}
\spnewtheorem{clm}{Claim}{\it}{\rm}
\spnewtheorem*{clm*}{Claim}{\it}{\rm}
\spnewtheorem{rem}[thm]{Remark}{\bf}{\rm}
\spnewtheorem{sclm}{Subclaim}{\it}{\rm}
\spnewtheorem*{sclm*}{Subclaim}{\it}{\rm}
\title{Comparison of fine structural mice via coarse iteration\footnote{Originally published 30 
April, 2014, in Archive for Mathematical Logic, Volume 53, Issue 5-6, pp. 539-559.
The final publication is available at 
\url{link.springer.com}. See \url{http://link.springer.com/article/10.1007\%2Fs00153-014-0379-6}}}
\author{F. Schlutzenberg\and J. R. Steel}
\institute{F. Schlutzenberg
          \at Miami University, Oxford,
Ohio\\\email{farmer.schlutzenberg@gmail.com}
          \and
          J. R. Steel
          \at University of California, Berkeley, California\\\email{steel@math.berkeley.edu}}
\begin{document}
\maketitle
\abstract{Let $\mouseM$ be a fine structural mouse. Let $\construction$ be a
fully backgrounded $L[\es]$-con\-struc\-t\-ion
computed inside an iterable coarse premouse $S$. We describe a process
comparing $\mouseM$ with $\construction$,
through forming iteration trees on $\mouseM$ and on $S$. We then prove that this
process succeeds.
\keywords{Inner model\and Comparison\and Background construction}
\subclass{03E45 \and 03E55}}

\section{Introduction}\label{sec:intro}
Let $\mouseM$ be a fine structural mouse. Let
$\construction=\left<N_\alpha\right>_{\alpha\leq\conlength}$ be a fully
backgrounded $L[\es]$-construction
\footnote{That is, a background extender construction using total background
extenders, similar to that defined in \cite[\S 11]{fsit}.} computed inside an
iterable coarse premouse $S$. In certain situations, one would like
to
compare $\mouseM$ with $\construction$, carrying along the universe $S$.
For example, one might want
to form an iteration tree $\Tt$ on $\mouseM$, with last
model $\mouseM'$, and an iteration tree $\Uu$ on $S$, also with a last
model, such that either
$i^\Uu(N_\conlength)\ins\mouseM'$ or $\mouseM'=N_\alpha^{i^\Uu(\construction)}$
for some $\alpha$. (Here $\Tt$ is fine, as in
\cite[Chap. 5]{fsit}, and $\Uu$ is coarse, as in \cite{it}.) We give
details of such a comparison here, making fairly minimal assumptions about the
$L[\es]$-construction. This sort of comparison is used (without explanation of the details) in \cite{derived}.
\footnote{See the proofs of Corollary 14.3 and Theorem 16.1 of \cite{derived}.}
\footnote{A related problem is that of comparing (the outputs
of) two $L[\es]$-con\-struc\-t\-ions
$\CC^R,\CC^S$, computed inside coarsely iterable universes $R,S$, through
forming coarse iteration trees on $R,S$. This problem presents somewhat
different challenges, and will be dealt with in a separate paper.}

\emph{Notation \& Definitions:} Given a transitive structure $M$,
we use both $\OR^M$ and $\OR(M)$ for $\OR\inter M$. Likewise for other classes of $M$.
See \cite{it} for the definition of \emph{coarse
premouse}, and \cite{outline} for \emph{premouse}. Let $M$ be a premouse and
let $\alpha\leq\OR^M$ be a limit. We write $M|\alpha$ for the $P$ such that
$\OR^P=\alpha$ and
$P\ins M$, and we write $M||\alpha$ for its passive counterpart. We write $F^M$
for the active extender of $M$, $\es^M$ for the extender sequence of $M$,
\emph{not} including $F^M$, and $\es_+^M$ for $\es^M\conc F^M$. We
write $\lh$ for length, and $\nu(F)$ denotes the natural length
of an extender $F$. We say $M$ is \emph{typical} iff
condensation holds for the proper segments (i.e., proper
initial segments) of $M$, and
\cite[4.11, 4.12, 4.15]{thesis} hold for $M$ and its proper segments. (These
properties are consequences of $(0,\om_1,\om_1+1)$-iterability.) Given a
squashed premouse $N$, we
write $N^\unsq$ for the unsquash of $N$; if $N$ is a premouse, we let
$N^\unsq=N$.
Given an iteration tree $\Tt$ of successor length $\theta+1$, we
write $b^\Tt$ for $[0,\theta]_\Tt$ and $\I^\Tt$ for $M^\Tt_\theta$. Given
an extender $E^*$, we write $\rho(E^*)$ for the strength of $E^*$, i.e. the
largest $\rho$ such that $V_\rho\sub\Ult(V,E^*)$.  Let $R$ be a coarse premouse
and $\Uu$ a putative iteration tree on $R$. We say $\Uu$ is \emph{strictly
strength increasing} iff for every $\alpha+1<\beta+1<\lh(\Uu)$ we have
$\rho^{M^\Uu_\alpha}(E^\Uu_\alpha)<\rho^{M^\Uu_\beta}(E^\Uu_\beta)$; $\Uu$ is
\emph{nonoverlapping} iff for every $\alpha+1<\lh(\Uu)$, $\Uu\pred(\alpha+1)$
is the least $\gamma\leq\alpha$ such that
$\crit(E^\Uu_\alpha)<\rho(E^\Uu_\delta)$ for all $\delta\in[\gamma,\alpha)$;
$\Uu$ is \emph{normal} iff it is strictly strength increasing and
nonoverlapping. Given an iteration tree $\Tt$, we write
$\Tt\conc\Pp$ for an extension of $\Tt$ consisting only of padding
$\Pp=\left<\emptyset,\emptyset,\ldots\right>$; here $\lh(\Pp)$
is determined by context. We consider $\emptyset$ as the trivial extender, with
$\Ult(V,\emptyset)=V$ and $i_\emptyset=\id$. We write
$\nu(\emptyset)=\rho(\emptyset)=\infty$.

\section{Main result}
\begin{dfn}[Construction]\label{dfn:construction} Suppose $V=(|V|,\delta)$ is a coarse
premouse.\footnote{We don't assume $V\sats\ZFC$ here; we're just working inside
some coarse premouse.}
Let $x\in\RR$. We say $\CC$ is an \emph{$L[\es,x]$-construction} iff:
\begin{itemize}
 \item[(a)] $\CC=\left<N_\alpha,E^*_\alpha\right>_{\alpha\leq\lambda}$ is a
sequence of $x$-premice $N_\alpha$ and extenders $E^*_\alpha\in V_\delta$
(possibly $E^*_\alpha=\emptyset$);
 \item[(b)] $N_0=\J_1(x)$;
 \item[(c)] For limit $\eta\leq\lambda$, $N_\eta$ is the lim inf of
$\left<N_\gamma\right>_{\gamma<\eta}$;
 \item[(d)] Let $\alpha<\lambda$. Either
\begin{itemize}
\item[(i)] $N_{\alpha+1}=\J_1(\core_\om(N_\alpha))$; or
\item[(ii)] $N_\alpha$ is passive, $N_{\alpha+1}$ is active with
$N_{\alpha+1}=(N_\alpha,F)$ for some $F$, and $F\rest\nu(F)\sub
E^*_{\alpha+1}$.
\end{itemize}
\end{itemize}
Let $\CC=\left<N_\alpha,E^*_\alpha\right>_{\alpha\leq\lambda}$ be an
$L[\es,x]$-construction.
Given $\alpha+1\leq\lambda$ such that $N=N_{\alpha+1}$ is active, $F=F^N$ and
$E^*=E^*_{\alpha+1}$,
we associate an extender $F^*_{\alpha+1}=E^*\rest\beta$ where $\beta$ is least
such that $\beta\geq\nu(F)$ and $\rho(E^*\rest\beta)\geq\min(\rho(E^*),\nu(F))$.
Then in fact, $\rho(E^*\rest\beta)=\min(\rho(E^*),\nu(F))$.
(We have $\nu(F)\leq\lh(E^*)$ since $F\rest\nu(F)\sub E^*$.)

We say $\CC$ is (e) \emph{strongly reasonable}, (f) \emph{reasonable}, (g)
\emph{normal} iff for all $\alpha<\lambda$, if $N=N_{\alpha+1}$ is active then
\begin{itemize}
\item[(e)] \emph{Strong reasonableness}: For all $\kappa<\nu(F^N)$, if
$N\sats$``$\kappa$ is inaccessible'' then $\kappa<\rho(E^*_{\alpha+1})$.
\item[(f)] \emph{Reasonableness}: Let $\lambda$ be the largest limit cardinal of
$N$, let $\eta=(\lambda^+)^N$ and $\kappa<\nu(F^N)$. Suppose $\kappa$ is
measurable in $U=\Ult(V,E^*_{\alpha+1})$ and for every $\xi<\eta$ there are
$E',N'\in U$ such that $U\sats$``$E'$ is an extender with $\crit(E')=\kappa$'',
$N'$ is an active premouse, $\OR^{N'}>\xi$, either $N'\pins N$ or
$N'||\eta=N||\eta$, and $F^{N'}\rest\nu(F^{N'})\sub E'$. Then
$\kappa<\rho(E^*_{\alpha+1})$.
\item[(g)] \emph{Normality}: (i) $E^*_{\alpha+1}\in\her_{|V_\rho|+1}$, where
$\rho=\rho(E^*_{\alpha+1})$; (ii)
For all $\kappa$ such that $\nu(F^{N_{\alpha+1}})<\kappa<\rho(E^*_{\alpha+1})$,
we have $\Ult(V,E^*_{\alpha+1})\sats$``$\kappa$ is not measurable''.
\end{itemize}
\end{dfn}

\begin{rem} The reasonableness of $\CC$ is roughly what we need to prove that
the comparison to be defined succeeds (it will be used to show that the coarse
tree $\Uu$ that we build does not move fine-structural generators); the
definition is extracted from the proof. The assumption is probably not optimal,
but it seems hard to get by with much less. In typical applications, an
$L[\es,x]$-construction is strongly reasonable or more; the proof that the
comparison succeeds simplifies a little under this extra assumption (but only in
one spot).\end{rem}

\begin{rem}\label{rem:factor}Given an active $N=N_{\alpha+1}$ and
$E^*=E^*_{\alpha+1}$ as in \ref{dfn:construction}, we have a canonical factor
embedding $j:\Ult(N,F^N)\to i_{E^*}(N)$, which is $\Sigma_0$-elementary,
preserves cardinals, and $\crit(j)\geq\nu(F^N)$ and $j\com
i^N_{F^N}=i_{E^*}\rest N$. Using $j$, it's easy to see that if $\CC$ is strongly
reasonable then it is reasonable.\end{rem}

\begin{rem}
 Our main theorem, \ref{thm:main1}, is used in the proofs of \cite[14.3,
16.1]{derived}.
Given a real $x$, part (b) of the conclusion of the theorem can be used
to ensure that for each limit $\lambda<\lh(\Tt)$,
$(x,\Tt\rest\lambda)$ is (class) extender algebra generic over
$M(\Tt\rest\lambda)$. This is used in the proof of \cite[16.1]{derived}. The
first author wishes to thank Nam Trang for pointing out to him that the version
of the theorem given in an earlier draft of the paper, which omitted (b), was
insufficient for the proof of \cite[16.1]{derived}.
In
the construction of $\Tt$ and
$\Uu$, if one omits the use of extenders included specifically for the purposes
of establishing (b), then one still obtains trees satisfying (a) and (c).
The next two definitions relate to part (b).
\end{rem}

\begin{dfn}
A pair $(\Tt,\Uu)$ of padded iteration trees is \emph{neat} iff we have:
(a) $\lh(\Tt)=\lh(\Uu)$; (b) $\Tt$ is on a premouse and is normal; (c) Let
$\lambda\leq\lh(\Tt)$ be a limit. Then either $\Tt\rest\lambda$ is cofinally
non-padded (i.e. $E^\Tt_\alpha\neq\emptyset$ for cofinally many
$\alpha<\lambda$) or $\Uu\rest\lambda$ is cofinally non-padded. If both are
cofinally non-padded then
$\delta(\Tt\rest\lambda)=\delta(\Uu\rest\lambda)$. In any case, let
$\delta_\lambda$ denote $\delta(\Tt\rest\lambda)$ or $\delta(\Uu\rest\lambda)$,
whichever is defined. Then for all limits $\gamma<\lambda\leq\lh(\Tt)$ we
have $\delta_\gamma<\delta_\lambda$.

Assume $(\Tt,\Uu)$ is neat. The \emph{neat code} for $(\Tt,\Uu)$ is the
set of triples $(i,\delta,\gamma)$ such that for some limit $\lambda<\lh(\Tt)$
we have $\delta=\delta_\lambda$ and either (i) $i=0$, $\Tt\rest\lambda$ is
cofinally non-padded and for some
$\alpha\in[0,\lambda)_\Tt$ such that
$E^\Tt_\alpha\neq\emptyset$, we have $\gamma=\lh(E^\Tt_\alpha)$; or (ii) $i=1$
and $\Uu\rest\lambda$ is cofinally non-padded and $\gamma\in[0,\lambda)_\Uu$.
\end{dfn}

\begin{dfn}
Let $M$ be a premouse, let $\sigma<\OR^M$,
$Y\sub\OR$ and $Z\sub\OR^3$. We say that $M$
is \emph{$(Y,Z)$-valid at $\sigma$} iff either $\sigma$ is not a cardinal in
$M$,
or for all $E\in\es_+^M$, if
$\sigma=\nu(E)$ and $M|\lh(E)\sats$``$\sigma$ is inaccessible'' then
$Y\sub\sigma$ and $(Y,Z\inter\sigma^3)$ satisfies all extender
algebra\footnote{Here we mean the ``$\delta$-generator'' extender algebra.
That is, for each $x\in\sigma^{<\om}$ there is a corresponding atomic formula
$\varphi_x$.}
axioms in $M|\lh(E)$
induced by $E$. We say that $M$ is
\emph{$(Y,Z)$-valid} iff $M$ is
$(Y,Z)$-valid at $\sigma$ for every $\sigma<\OR^M$.
\end{dfn}

\begin{dfn}
A coarse iteration tree $\Uu$ is \emph{normalizable} iff it is
nonoverlapping and for each
$\alpha+1<\lh(\Tt)$, $M^\Tt_\alpha\sats$``$E^\Tt_\alpha\in\her_{|V_\rho|+1}$
where $\rho=\rho(E^\Tt_\alpha)$''.\end{dfn}

The key property of a normalizable
tree is the following:

\begin{prop}\label{lem:normalizable} Let $\Uu$ be a normalizable putative tree
on a coarse premouse $R$. Then there is a unique normal padded putative tree
$\Uu'$ on $R$
such that $\lh(\Uu')=\lh(\Uu)$, and for each $\alpha+1<\lh(\Uu)$:
\begin{itemize}
\item[$\bullet$] $E^{\Uu'}_\alpha\neq\emptyset$ iff
$\rho(E^\Uu_\alpha)<\rho(E^\Uu_\beta)$
for all $\beta+1\in(\alpha+1,\lh(\Tt))$,
\item[$\bullet$] if $E^{\Uu'}_\alpha\neq\emptyset$ then
$E^{\Uu'}_\alpha=E^\Uu_\alpha$,
\item[$\bullet$] for all limits $\lambda<\lh(\Uu)$, if $\Uu'$ has non-padded
stages
cofinally in $\lambda$, then $[0,\lambda]_{\Uu'}=[0,\lambda]_\Uu$.
\end{itemize}
\end{prop}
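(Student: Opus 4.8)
The plan is to read $\Uu'$ off from $\Uu$ and then verify, by induction on $\lh$, that it has the stated properties. Call a stage $\alpha+1<\lh(\Uu)$ \emph{retained} iff $E^\Uu_\alpha\neq\emptyset$ and $\rho^{M^\Uu_\alpha}(E^\Uu_\alpha)<\rho^{M^\Uu_\beta}(E^\Uu_\beta)$ for every $\beta$ with $\alpha<\beta$, $\beta+1<\lh(\Uu)$ and $E^\Uu_\beta\neq\emptyset$; let $A$ be the set of retained stages (with the convention $\rho(\emptyset)=\infty$ this agrees with the first bullet). We take $E^{\Uu'}_\alpha=E^\Uu_\alpha$ for $\alpha\in A$ and $E^{\Uu'}_\alpha=\emptyset$ otherwise, forcing the second bullet; the tree order at successor stages is then forced by nonoverlapping; and at a limit $\lambda<\lh(\Uu)$ we put $[0,\lambda]_{\Uu'}=[0,\lambda]_\Uu$ when $\Uu'\rest\lambda$ is cofinally non-padded, and otherwise let $[0,\lambda]_{\Uu'}$ be the eventual thread, so $M^{\Uu'}_\lambda=M^{\Uu'}_\gamma$ for all large $\gamma<\lambda$. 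It remains to check that this recursion goes through and that $\Uu'$ is normal.

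Alongside the recursion I would maintain: for each $\alpha<\lh(\Uu)$ there is $k_\alpha\colon M^{\Uu'}_\alpha\to M^\Uu_\alpha$ which is the identity on $V_{\theta_\alpha}^{M^{\Uu'}_\alpha}=V_{\theta_\alpha}^{M^\Uu_\alpha}$, where $\theta_\alpha$ is a rank large enough to contain, and make absolute the property of being an extender of strength $\rho^{M^\Uu_\beta}(E^\Uu_\beta)$, for every $\beta\in A$ with $\beta\geq\alpha$; here one uses normalizability to see $E^\Uu_\beta\in\her^{M^\Uu_\beta}_{|V_\rho|+1}$ for $\rho=\rho^{M^\Uu_\beta}(E^\Uu_\beta)$, so such a $\theta_\beta$ exists, and sets $\theta_\alpha=\sup\{\theta_\beta+1:\beta\in A,\ \alpha\leq\beta,\ \beta+1<\lh(\Uu)\}$. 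The successor step has three cases. If $E^\Uu_\alpha=\emptyset$, both trees pad and $k_{\alpha+1}=k_\alpha$. If $E^\Uu_\alpha\neq\emptyset$ but $\alpha\notin A$, then $\Uu'$ pads while $\Uu$ applies $E^\Uu_\alpha$; by the key claim below, $\alpha+1$ is not $\leq_\Uu$ any retained stage nor $\leq_\Uu$ any limit at which $\Uu'$ is cofinally non-padded, so the hypothesis need only be propagated past this stage and we set $M^{\Uu'}_{\alpha+1}=M^{\Uu'}_\alpha$. If $\alpha\in A$, one first shows $\Uu'\pred(\alpha+1)=\Uu\pred(\alpha+1)$: the $\Uu'$-predecessor is $\leq$ the $\Uu$-predecessor since dropping extenders only weakens the overlap condition, and if it were strictly smaller, tracing downward through the dropped (hence $\rho$-nonminimal) extenders $E^\Uu_\delta$ strictly between the two candidates — each of which has a later extender of strength $\leq\rho(E^\Uu_\delta)\leq\crit(E^\Uu_\alpha)$ — one would reach some $\beta\geq\alpha$ with $\rho^{M^\Uu_\beta}(E^\Uu_\beta)\leq\crit(E^\Uu_\alpha)<\rho^{M^\Uu_\alpha}(E^\Uu_\alpha)$, contradicting $\alpha\in A$. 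Then, since $k_{\Uu\pred(\alpha+1)}$ is the identity well above the rank of $E^\Uu_\alpha$ and above $\crit(E^\Uu_\alpha)$, the same extender $E^\Uu_\alpha$ is legitimately applied to $M^{\Uu'}_{\Uu\pred(\alpha+1)}$, and $k_{\alpha+1}$ is the Shift Lemma copy of $k_{\Uu\pred(\alpha+1)}$, with critical point $\geq\theta_{\alpha+1}$ because, by definition of $A$, every retained extender after $\alpha$ has strength above $\rho^{M^\Uu_\alpha}(E^\Uu_\alpha)$. Since the $k_\alpha$ preserve strengths of retained extenders and $A$ was chosen strictly strength-increasing, $\Uu'$ is strictly strength increasing; it is nonoverlapping by construction; hence $\Uu'$ is normal. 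Finally, every model of $\Uu'$ other than possibly the last is wellfounded, since it embeds via $k_\alpha$ into a wellfounded model of $\Uu$.

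The limit step, and with it the following \emph{key claim}, is the main obstacle: \emph{if $E^\Uu_\eta\neq\emptyset$ and $\eta\notin A$, then $\eta+1\not\leq_\Uu\beta$ for every retained stage $\beta>\eta$, and $\eta+1\not\leq_\Uu\lambda$ for every limit $\lambda<\lh(\Uu)$ at which $\Uu'\rest\lambda$ is cofinally non-padded.} Granting it, one checks at such a limit $\lambda$ that $[0,\lambda]_\Uu$ is a legitimate $\Uu'$-branch: it is cofinal in $\lambda$, and it is $<_{\Uu'}$-closed since each of its successor stages $\alpha+1$ is either retained, where $\Uu'\pred(\alpha+1)=\Uu\pred(\alpha+1)$, or a padding stage of $\Uu$, where both predecessors equal $\alpha$, the remaining case $E^\Uu_\alpha\neq\emptyset$, $\alpha\notin A$ being excluded by the claim; then $M^{\Uu'}_\lambda$, carrying the direct-limit map $k_\lambda=\varinjlim k_\alpha$ into the wellfounded $M^\Uu_\lambda$, is wellfounded and $k_\lambda$ is the identity up to $\theta_\lambda$. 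To prove the claim, suppose $\eta+1\leq_\Uu\beta$, let $\xi'+1$ be the $\Uu$-successor of $\eta+1$ on $[0,\beta]_\Uu$, so $\eta+1=\Uu\pred(\xi'+1)$ is least with $\crit(E^\Uu_{\xi'})<\rho(E^\Uu_\delta)$ for $\delta$ in the relevant interval; leastness at $\delta=\eta$ gives $\crit(E^\Uu_{\xi'})\geq\rho(E^\Uu_\eta)$. Since $\eta\notin A$, fix $\beta_0>\eta$ with $E^\Uu_{\beta_0}\neq\emptyset$ and $\rho(E^\Uu_{\beta_0})\leq\rho(E^\Uu_\eta)\leq\crit(E^\Uu_{\xi'})<\rho(E^\Uu_{\xi'})$; then $\beta_0$ cannot lie in $[\eta+1,\xi')$ (by the overlap inequality just used), cannot be $\xi'$ or lie above $\xi'$ with $\xi'\in A$ (using $\crit(E^\Uu_{\xi'})<\rho(E^\Uu_{\xi'})$ and the definition of $A$), so $\xi'\notin A$. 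Thus $\xi'$ plays the role of $\eta$ one step up $[0,\beta]_\Uu$, and iterating produces an infinite $\Uu$-increasing chain of such stages cofinal in its $\Uu$-supremum below $\beta$, so that no retained stage lies $\leq_\Uu$-above $\eta+1$ on this branch; the second half of the claim follows from the first by the cofinality analysis at $\lambda$, using that when $\Uu'\rest\lambda$ is cofinally non-padded the branch $[0,\lambda]_\Uu$ must meet $A$ cofinally in $\lambda$ (else $\delta(\Uu'\rest\lambda)$ could not be realized inside $M^{\Uu'}_\lambda$ along a branch that is eventually pure padding).

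Finally, uniqueness: any $\Uu'$ as required has $E^{\Uu'}_\alpha$ determined by the first two bullets; normality then determines the tree order at successor stages; the third bullet determines $[0,\lambda]_{\Uu'}$ at limits where $\Uu'$ is cofinally non-padded; and at the remaining limits, where $\Uu'\rest\lambda$ is eventually pure padding, normality forces $[0,\lambda]_{\Uu'}$ to be the eventual thread. Hence $\Uu'$ is unique. The hardest part is the limit step — specifically, establishing the key claim and the associated fact that the retained stages accumulate onto $[0,\lambda]_\Uu$ cofinally in each cofinally-non-padded limit $\lambda$ — everything else being a bookkeeping of the embeddings $k_\alpha$ via the Shift Lemma.
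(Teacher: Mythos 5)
The paper's own proof of this proposition is literally ``Omitted,'' so there is nothing in the text to compare against; I can only evaluate your argument on its merits. Your overall plan --- define $\Uu'$ by retaining exactly the strength-minimal-from-now-on extenders, carry copy maps $k_\alpha:M^{\Uu'}_\alpha\to M^\Uu_\alpha$ agreeing up to a rank bound extracted from normalizability, and isolate the ``dropped stages are $\Uu$-dead ends'' claim --- is the natural one and I believe it is what any proof here would do. The main substantive point, that for $\beta\in A$ every successor node on $[0,\beta+1]_\Uu$ has index in $A$, is correctly identified as the heart of the matter.

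However, both places where you close an argument by ``tracing downward'' or ``iterating'' have a gap at limit ordinals that you do not address. In the argument that $\Uu'\!\pred(\alpha+1)=\Uu\!\pred(\alpha+1)$, you claim that chasing witnesses for $\delta\notin A$ eventually produces some $\beta\geq\alpha$ with $\rho(E^\Uu_\beta)\leq\crit(E^\Uu_\alpha)$. But the case analysis you set up forces every such witness to lie strictly below $\gamma=\Uu\!\pred(\alpha+1)\leq\alpha$: a witness in $[\gamma,\alpha)$ violates the nonoverlapping condition at $\gamma$, and a witness $\geq\alpha$ violates $\alpha\in A$. So the chain $\delta<\delta_1<\delta_2<\cdots$ is bounded by $\gamma$, does not ``reach $\beta\geq\alpha$,'' and if $\gamma$ is a limit nothing stops this chain from simply converging to $\gamma$ with nonincreasing (eventually constant) strengths. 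Likewise in the key claim, passing from $\eta+1$ to the next successor node $\xi'+1$ and showing $\xi'\notin A$ works when $\Uu\!\pred(\xi'+1)$ is the successor node you just left, but when the chain passes through a limit node of $[0,\beta+1]_\Uu$ your one-step argument does not show that the next stage after the limit node is dropped, so the induction does not obviously cross limits. A cleaner route for the key claim that avoids the iteration entirely: if $\xi+1<_\Uu\beta+1$, $\xi\notin A$, and $\beta\in A$, pick $\xi''>\xi$ with $\rho(E^\Uu_{\xi''})\leq\rho(E^\Uu_\xi)$; the branch inequality $\rho(E^\Uu_\xi)\leq\crit(E^\Uu_\beta)<\rho(E^\Uu_\beta)$ forces $\xi''<\beta$; let $\sigma$ be the largest branch node $\leq\xi''$ and $\xi'+1$ the next branch node (a successor, with $\Uu\!\pred(\xi'+1)=\sigma$ and $\xi'\geq\xi''$); then nonoverlapping at $\xi'+1$ gives $\crit(E^\Uu_{\xi'})<\rho(E^\Uu_{\xi''})$, while $\xi+1<_\Uu\xi'+1$ gives $\rho(E^\Uu_\xi)\leq\crit(E^\Uu_{\xi'})$, so $\rho(E^\Uu_\xi)<\rho(E^\Uu_{\xi''})\leq\rho(E^\Uu_\xi)$, contradiction. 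You would need something similar to repair the $\Uu'\!\pred=\Uu\!\pred$ argument. Finally, the invariant as literally stated (``for each $\alpha<\lh(\Uu)$ there is $k_\alpha$ which is the identity on $V_{\theta_\alpha}$'') cannot hold at a stage $\alpha+1$ where $\alpha$ is dropped yet some later retained $\beta$ has $\rho(E^\Uu_\beta)>\rho(E^\Uu_\alpha)$; you note that such stages are dead ends, but the invariant should explicitly be restricted to the nodes that are $\leq_\Uu$ some retained successor or $\leq_\Uu$ some $\Uu'$-cofinally-non-padded limit, rather than asserted for all $\alpha$.
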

\begin{proof}Omitted.\qed\end{proof}

\begin{thm}[Main Theorem]\label{thm:main1}
Let $\mouseM,\cpmR\in\Her_{\upsilon^+}$ and $x\in\RR\inter\cpmR$. Let
$m,n\leq\om$.

Suppose $\mouseM$ is an $m$-sound, typical\footnote{\emph{Typical} is defined
at the end of \S\ref{sec:intro}.}, normally
$(m,\upsilon^++1)$-iterable
(fine) $x$-premouse. Let $\Sigma_\mouseM$ be an $(m,\upsilon^++1)$-iteration
strategy
for $\mouseM$.

Suppose $\cpmR=(|\cpmR|,\delta^\cpmR)$ is a $(\upsilon^++1)$-iterable coarse
premouse.\footnote{It's not particularly important that $\cpmR$
be a coarse
premouse. We just need that iteration maps on $\cpmR$ are sufficiently
elementary,
for iteration trees using extenders in $V_\conlength^{\cpmR}$ and its images.}
Let $\Sigma_\cpmR$ be a $(\upsilon^++1)$-iteration strategy for $\cpmR$.
Let $\conlength\leq\delta^\cpmR$ and let
$\construction=\left<N_\alpha\right>_{\alpha\leq\conlength}\in\cpmR$ be such
that
$\cpmR\sats$``$\construction$ is a reasonable $L[\es,x]$-construction''.

Let $A\sub\upsilon^+$ with $A$ bounded in $\upsilon^+$.

Then there is
a padded $m$-maximal
normal iteration tree $\Tt$ on $\mouseM$, via
$\Sigma_\mouseM$,
and
a padded iteration tree $\Uu$
on $\cpmR$, via $\Sigma_\cpmR$, both of successor length
$<\upsilon^+$, such
that:
\begin{enumerate}
 \item[(a)] Either:
\begin{itemize}
\item[(i)] $i^\Uu(\core_n(N_\conlength))\ins \I^\Tt$; or
\item[(ii)] $b^\Tt$ does not drop in model or degree and
$\I^\Tt=\core_m(N_\alpha^{i^\Uu(\construction)})$ for some $\alpha\leq
i^\Uu(\conlength)$.
\end{itemize}
 \item[(b)] $(\Tt,\Uu)$
is neat. Let $B$ be the neat code for $(\Tt,\Uu)$.
 \begin{itemize}
  \item[(i)] If $i^\Uu(\core_n(N_\conlength))\pins\I^\Tt$ or
[$b$ drops in model and $i^\Uu(\core_n(N_\conlength))=\I^\Tt$]
then let
$P=i^\Uu(\core_n(N_\conlength))$ and $\rho=\rho_n^P$.
 \item[(ii)] If $\I^\Tt=\core_m(N^{i^\Uu(\construction)}_\alpha)$ for some
$\alpha<i^\Uu(\conlength)$ then let $P=\I^\Tt$ and $\rho=\rho_m^P$.
 \item[(iii)]
Let $k=\min(m,n)$.
If $b$ does not drop in model and
$\I^\Tt=i^\Uu(\core_k(N_\conlength))$ then let $P=\I^\Tt$
and $\rho=\rho_k^P$.
 \end{itemize}
  Let $\tau=(\rho^+)^P$.\footnote{Here if
$\rho=\OR^{P}$ or $\rho$ is the largest
cardinal of $P$ then $(\rho^+)^{P}$ denotes $\OR^{P}$. In particular, if $n=0$
and $P$ is type 3 then $(\rho^+)^P=\OR^P$, not $\OR(\core_0(P))$.}
Then $P|\tau$
is $(A,B)$-valid.
\item[(c)] Let $\CC^\alpha=i^\Uu_{0,\alpha}(\construction)$ for
$\alpha<\lh(\Uu)$.
We may take $\Uu$ to satisfy condition (i) below;
alternatively (ii) below. (But maybe not (i) and
(ii) simultaneously.)\begin{itemize}
\item[(i)] $\Uu$ is nonoverlapping, and for each
$\alpha+1<\lh(\Uu)$, if $E^\Uu_{\alpha}\neq\emptyset$ then
$M^\Uu_\alpha\sats$``There is $\gamma+1\leq
i^\Uu_{0,\alpha}(\conlength)$ such that $N=N^{\CC^\alpha}_{\gamma+1}$ is active
and
$E^\Uu_\alpha=F^*_{\gamma+1}$''; or
\item[(ii)] For each
$\alpha+1<\lh(\Uu)$, if $E^\Uu_{\alpha}\neq\emptyset$ then
$M^\Uu_\alpha\sats$``There is $\gamma+1\leq\conlength$ such that
$N=N^{\CC^\alpha}_{\gamma+1}$ is active, and $E^\Uu_\alpha=E^*_{\gamma+1}$'',
and moreover, if $\cpmR\sats$``$\construction$ is normal'' then $\Uu$ is
normalizable.
\end{itemize}
\end{enumerate}
\end{thm}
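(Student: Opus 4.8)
The proof is a simultaneous comparison of $\mouseM$ against the construction $\construction$ inside $\cpmR$, run as a length-$<\upsilon^+$ process that builds $\Tt$ on $\mouseM$ (via $\Sigma_\mouseM$, $m$-maximal and padded) and $\Uu$ on $\cpmR$ (via $\Sigma_\cpmR$) in lockstep, $\lh(\Tt)=\lh(\Uu)$. At a typical stage $\alpha$ we have $M^\Tt_\alpha$ on one side and the image construction $\CC^\alpha=i^\Uu_{0,\alpha}(\construction)$ inside $M^\Uu_\alpha$ on the other. We compare $M^\Tt_\alpha$ with the premice appearing on $\CC^\alpha$: either some $\core_m(N^{\CC^\alpha}_\gamma)$ already agrees with $M^\Tt_\alpha$ up to its ordinal height (or end-extends it, or is end-extended by it), in which case we are done and land in conclusion (a), or else there is a least disagreement. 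The least disagreement is witnessed by an extender. On the $\mouseM$-side we feed that extender into $\Tt$ as $E^\Tt_\alpha$ in the usual $m$-maximal way (choosing the $\Tt$-predecessor by the initial-segment/$\nu$ rule). On the $\cpmR$-side, when the disagreeing extender comes from the construction — say $N=N^{\CC^\alpha}_{\gamma+1}$ is active with top extender $F=F^N$ — we do \emph{not} put $F$ itself into $\Uu$ (it need not be in $M^\Uu_\alpha$'s ``$V_\delta$''); instead we use the associated background (image) extender, namely $E^*_{\gamma+1}$ of $\CC^\alpha$, or, for the alternative in (c)(i), the cut-down background $F^*_{\gamma+1}=E^*_{\gamma+1}\rest\beta$ defined in \ref{dfn:construction}. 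Either choice has the property — by Remark~\ref{rem:factor} and the factor map $j$ — that its ultrapower agrees with $\Ult(N,F)$ on $\nu(F)+1$ and beyond far enough, and preserves cardinals and the relevant fine structure, so the coarse iteration of $\cpmR$ tracks the fine iteration of the construction. We also, for part (b), interleave extra extenders into $\Tt$ (and correspondingly pad or act on $\Uu$) to force extender-algebra genericity of the relevant codes $A,B$ over the models $M(\Tt\rest\lambda)$; if one drops these, (a) and (c) still hold, as noted in the Remark.

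\textbf{Termination.}
The main work is proving the process stops at some successor stage $<\upsilon^+$. This is the standard comparison termination argument, adapted to the coarse/background setting. Suppose for contradiction it runs to length $\upsilon^++1$ (or does not terminate at any successor stage). By $(m,\upsilon^++1)$-iterability of $\mouseM$ and $(\upsilon^++1)$-iterability of $\cpmR$, $\Sigma_\mouseM$ and $\Sigma_\cpmR$ give us cofinal wellfounded branches at limits, so both trees are well-defined of length $\upsilon^++1$. Let $b=b^\Tt$ and $c=b^\Uu$ be the final branches. Pick $\xi<\upsilon^+$ large enough that the generators used along both branches are eventually below the image of $\xi$; a Skolem-hull/closure argument (using that everything lives in $\Her_{\upsilon^+}$ and that $\upsilon^+$ is regular) produces a stage $\alpha<\upsilon^+$ on $b\cap c$ past which no further extender in either tree has critical point or generators reaching back below $\crit$ of the $\alpha$-to-end maps. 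Then the usual argument shows that at stage $\alpha$ the two sides already agree (the would-be least disagreement has been iterated away on both sides), so the process should have stopped — contradiction.

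\textbf{Main obstacle: the coarse tree must not move fine-structural generators.}
The crux — and the reason \emph{reasonableness} of $\construction$ is assumed — is the following. When we use a background extender $E^*_{\gamma+1}$ (or $F^*_{\gamma+1}$) as $E^\Uu_\alpha$, its critical point $\kappa$ could in principle sit strictly between two fine-structural generators of some extender already on $\Tt$ or on the construction side, i.e.\ $\kappa$ could fail to be $<\rho(E^*_{\gamma+1})$; were that to happen, applying $i_{E^\Uu_\alpha}$ would disrupt the agreement between $M^\Tt_\alpha$ and $\CC^\alpha$ below the disagreement, and the comparison would not be coherent. One must show this never occurs: if $\kappa<\nu(F^N)$ and $\kappa$ were such a ``bad'' point, then $\kappa$ is measurable in $\Ult(V^{M^\Uu_\alpha},E^*_{\gamma+1})$ and, because the same fine-structural extenders re-appear cofinally below $\eta=(\lambda^{+})^N$ as certified premice with background extenders with critical point $\kappa$, the hypothesis of clause (f) (Reasonableness) is met, forcing $\kappa<\rho(E^*_{\gamma+1})$ after all — so $\kappa$ is \emph{not} a generator strictly below a later generator. (Under strong reasonableness, clause (e), the argument is the same but shorter: inaccessibility of $\kappa$ in $N$ below $\nu(F^N)$ already suffices, which is the ``one spot'' the Remark alludes to.) This is where the proof is extracted from, and verifying that the reappearing-certified-premice hypothesis of (f) genuinely holds at each such stage — keeping careful track of the images $N^{\CC^\alpha}_\gamma$, the factor maps $j$, and the agreement of ultrapowers — is the technical heart of the argument. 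Once that is in hand, (a) follows from termination, (c)(i)/(c)(ii) are bookkeeping about which background extender we chose (with normalizability of $\Uu$ in case (c)(ii) coming from Proposition~\ref{lem:normalizable} together with normality of $\construction$), and (b) follows from the extra extenders interleaved to realize genericity, together with the $(A,B)$-validity being preserved along $b^\Tt$ by $m$-maximality and condensation/typicality of the models.
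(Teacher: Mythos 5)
Your sketch captures the high-level intent — simultaneous comparison, backgrounds on the $\cpmR$-side, reasonableness used to show $\Uu$ does not move fine-structural generators, standard Skolem-hull termination — but it omits the two mechanisms the paper is actually built around, and without them the ``lockstep least-disagreement'' design breaks.

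First, \emph{retraction}. You describe feeding the disagreeing fine extender into $\Tt$ and, in the same breath, the associated background into $\Uu$. The paper very deliberately does \emph{not} do this: at each stage exactly one of $E^{\Tt^{\alpha+1}}_\alpha$, $E^\Uu_\alpha$ is nonempty, with the $\Uu$-side preferred. The paper's own footnote explains why: the factor map $j:\Ult(N,F^N)\to i_{E^*}(N)$ has $\crit(j)\geq\nu(F^N)$ but gives nothing above $\nu(F^N)$, so after applying $i_{E^\Uu_\alpha}$ to the construction side one may find $M|\theta_k\ins i_{E^\Uu_\alpha}(N_{\xi_k})$, i.e.\ an extender already loaded onto $\Tt$ turns out to be an initial segment of the iterated construction and was never a real disagreement. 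The fix is to \emph{retract} it (set $\Tt^{\alpha+1}=\Tt^\alpha\rest(\gamma+1)\conc\Pp$), and the whole tree $\Tt$ is therefore built as the limit of a sequence $\Ttvec=\langle\Tt^\alpha\rangle$ of approximating trees, not directly. Your plan has no analogue of this, so it would attempt to carry forward an extender that must be discarded, and the trees would cease to be comparisons of the right objects.

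Second, the \emph{$(Y,Z)$-descent}. You say ``either some $\core_m(N^{\CC^\alpha}_\gamma)$ already agrees with $M^\Tt_\alpha$... or else there is a least disagreement,'' but between those two outcomes lies the resurrection-like bookkeeping that the paper packages into the descent $(c,d,e,\theta)$: passing through chains of projecta on both $M^\alpha$ and the construction, tracking the dropdown models $P_\rho, Q_\sigma, q_\sigma$, and the interaction with $(A,B)$-validity. Almost all of the paper's work is the inductive Claim establishing agreement facts about this data (Claim~\ref{clm:agmt}), which is then what makes the termination argument, the no-generator-movement argument, and the final core-level refinement (Claim~\ref{clm:core}) go through. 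Your termination paragraph reduces to ``the usual argument'' and your (b) and your core-refinement step to ``bookkeeping,'' but these rely precisely on the agreement invariants your sketch never sets up. In particular, the delicate case in the termination proof where $F^\Uu_\beta$ is type~1 and one needs \cite[4.11,4.12,4.15]{thesis} (this is exactly why $\mouseM$ is assumed typical) has no counterpart in your outline. In short: the shape of your proof is right, but retraction and the descent are not optional refinements — they are the content, and without them the comparison as you have proposed it would not be well-defined.
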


It seems we can't strengthen (c)(ii) by replacing ``normalizable'' with
``normal'', since the extraction of a normal tree $\Uu'$ from a normalizable
tree $\Uu$ can change the model of origin for a given extender (e.g. we can have
$E^{\Uu'}_{\alpha'}=E^\Uu_\alpha$ for some $\alpha'<\alpha$, and
$M^{\Uu'}_{\alpha'}=M^\Uu_{\alpha'}\neq M^\Uu_\alpha$), so
(c)(ii)
might fail for $\Uu'$ even if it
held for $\Uu$. Also, conclusion (b) becomes somewhat unclear if we replace $\Uu$ with $\Uu'$ (at least, with regard to the genericity of the code for $\Uu'$).

\begin{proof}[Theorem \ref{thm:main1}]
We will first produce an $m$-maximal normal tree $\Tt$ on $\mouseM$, via
$\Sigma_\mouseM$,
and a tree $\Uu$ on $\cpmR$, via $\Sigma_\cpmR$, each of successor length
$<\upsilon^+$, such that:
\begin{enumerate}[label=(\alph*'),ref=(\alph*')]
 \item\label{i:proof_main1_a} Either:
\begin{itemize}
\item[(i)] $i^\Uu(N_\conlength)\ins \I^\Tt$; or
\item[(ii)] $b^\Tt$ does not drop in model and
$\I^\Tt=N_\alpha^{i^\Uu(\construction)}$ for some $\alpha\leq
i^\Uu(\conlength)$.
\end{itemize}
 \item\label{i:proof_main1_b} $(\Tt,\Uu)$ is neat. Let $B$ be the
neat code for $(\Tt,\Uu)$. If \ref{i:proof_main1_a}(i) holds let
$P=i^\Uu(N_\conlength)$; otherwise let $P=\I^\Tt$. Then $P$ is
$(A,B)$-valid.
 \item\label{i:proof_main1_c} $\Uu$ satisfies \ref{thm:main1}(c)(i)
(alternatively, at our will,
\ref{thm:main1}(c)(ii)).
\end{enumerate}
We will then find $\eps<\lh(\Tt)$ such that $\Tt\rest\eps+1$ and
$\Uu\rest\eps+1$ satisfy the requirements of \ref{thm:main1}.

To construct $\Tt$, we will define a sequence
$\Ttvec=\left<\Tt^\alpha\right>_{\alpha\leq\zeta}$ of padded normal trees on
$\mouseM$, and will set $\Tt=\Tt^\zeta$. The trees $\Tt^\alpha$ approximate
initial
segments of $\Tt$; we will have $\lh(\Tt^\alpha)=\alpha+1$. We simultaneously
construct $\Ttvec$ and $\Uu$, recursively through ordinal stages
$\beta\leq\zeta$. The process
is much like standard comparison, but is also significantly different.

When
beginning stage $\beta$ we will have already built $\Uu\rest\beta$ and
$\Ttvec\rest\beta$. We will then define $\Uu\rest\beta+1$ and
$\Tt^\beta$. For limit $\beta$, the trees $\Ttvec\rest\beta$ will be defined
such
that the sequence converges to a padded tree $\Tt^{<\beta}$ of length $\beta$
with $(\Tt^{<\beta},\Uu\rest\beta)$ neat; we then apply our
iteration strategies to obtain $\Tt^\beta$ and $\Uu\rest\beta+1$. We will show
that for limit $\beta$,
$\Tt^\beta\ins\Tt^\alpha$ for all $\alpha\geq\beta$. (Applying this to the
largest limit $\beta'\leq\alpha$, it follows that
$(\Tt^\alpha,\Uu\rest\alpha+1)$ is neat.) At successor
stages $\alpha+1$, we will choose extenders $E^\Uu_\alpha\in M^\Uu_\alpha$
and
$E=E^{\Tt^{\alpha+1}}_\alpha\in\{\emptyset\}\un\es_+(M^{\Tt^\alpha}_\alpha)$,
such that either $E^\Uu_\alpha\neq\emptyset=
E$ or $E^\Uu_\alpha=\emptyset\neq
E$. If $E\neq\emptyset$ then
we will have $\lh(E)>\lh(F)$ for all extenders $F$ used in $\Tt^\alpha$, and we set
$\Tt^{\alpha+1}=\Tt^\alpha\conc\left<E\right>$, with
$\Tt^{\alpha+1}\pred(\alpha+1)$, etc, determined by $m$-maximality. In this case
we are making a tentative decision to use $E$ in the final tree $\Tt$; this
decision may be tentatively retracted later. If $E=\emptyset$ then we will set
$\Tt^{\alpha+1}=\Tt^\alpha\rest(\gamma+1)\conc\Pp$, where
$\gamma+1\leq\lh(\Tt^\alpha)$ and $\Pp=\left<\emptyset,\emptyset,\ldots\right>$
consists of only padding. Here if $\gamma+1<\lh(\Tt^\alpha)$, we will have
$E'=E^{\Tt^\alpha}_\gamma\neq\emptyset$, and we are tentatively retracting the
use
of $E'$ from the final $\Tt$; we may later change our mind
about $E'$ again. No such retractions occur in the construction of $\Uu$.
Regarding padding, if $E^\Uu_\alpha=\emptyset$ we set
$\Uu\pred(\alpha+1)=\alpha$ and $i^\Uu_{\alpha,\alpha+1}=\id$. If
$E^\Uu_\alpha\neq\emptyset$ then we will ensure that
$E^\Uu_{\Uu\pred(\alpha+1)}\neq\emptyset$ also. Likewise for trees in $\Ttvec$.

We will simultaneously define various other objects in order to guide our
selection of the extenders used in building $\Ttvec,\Uu$, and in order to prove
that the comparison succeeds.

We now begin the construction. We set $\Tt^0$ and $\Uu\rest 1$ to
be the trivial trees on $\mouseM$ and $\cpmR$ respectively.

Now consider stage $\alpha+1$. We are given trees $\Tt^\alpha$ and
$\Uu\rest\alpha+1$, with last models $M^{\Tt^\alpha}_\alpha$ and $M^\Uu_\alpha$
respectively. Define $M^\alpha=M^{\Tt^\alpha}_\alpha$, $R^\alpha=M^\Uu_\alpha$,
$\CC^\alpha=i^\Uu_{0,\alpha}(\construction)$,
$\conlength^\alpha=i^\Uu_{0,\alpha}(\conlength)$
and $N^\alpha_\beta=N^{\CC^\alpha}_\beta$.

We will analyse $M^\alpha$ and $(R^\alpha,\CC^\alpha)$. This will culminate in
either a proof that
our comparison has already succeeded (i.e., $\Tt^\alpha,\Uu\rest\alpha+1$ are as
in \ref{i:proof_main1_a}-\ref{i:proof_main1_c}), or else in a selection of
extenders
$E^{\Tt^{\alpha+1}}_\alpha,E^\Uu_\alpha$,
chosen by finding
the earliest roots of
disagreement between $M^\alpha$ and $\CC^\alpha$, or the first extenders we
reach that, if ignored, could be an obstacle to
validity.
The
analysis is related to resurrection (see \cite[\S 12]{fsit}).
We will in fact define the analysis a little more generally.
After this, we will explain how we determine $\Uu\pred(\alpha+1)$ and any
retraction of extenders required to form
$\Tt^{\alpha+1}$.

\begin{dfn}[$(Y,Z)$-descent]\label{dfn:descent}Let $M$ be an $x$-premouse. Let
$R=(|R|,\delta)$ be a coarse premouse with $x\in R$.
Let $\conlengthabstract\leq\delta$ and let
$\CC=\left<N_\alpha\right>_{\alpha\leq\conlengthabstract}\in R$ be such that
$R\sats$``$\CC$ is a reasonable $L[\es,x]$-construction''.
Let $Y,Z\sub\OR$. The $(Y,Z)$-\emph{descent} of
$(M,(R,\CC))$ is a quadruple $(c,d,e,\theta)$, defined as follows.

We will first define $k<\om$ and
$c = \left<\gamma_i,\xi_i,\mu_i,\theta_i\right>_{i\leq k}$,
with $\gamma_i,\xi_i,\mu_i\in\OR$ for all $i\leq k$, $\theta_i\in\OR$ for
$i<k$, and $\theta_k\in\OR\un\{\dagger\}$. We will also say
``$\theta_k$ is undefined'' to mean ``$\theta_k=\dagger$''.\footnote{Usually
$\theta_k\in\OR$. If in the descent of
$(M^\alpha,(R^\alpha,\CC^\alpha))$
we get $\theta_k=\dagger$ then we will show that the comparison has already been
successful, i.e. $\Tt^\alpha,\Uu\rest\alpha+1$ are as required. Moreover, this
is the only manner in which the comparison can terminate.}

We will have $k\geq 0$. Let $\gamma_0=\OR^M$ and $\xi_0=\conlengthabstract$.

Suppose that for some $i<\om$, we have determined that $k\geq i$, and have
defined $\gamma_i\leq\OR^M$ and $\xi_i\leq\conlengthabstract$.
%

Let $\mu_i$ be the largest ordinal $\mu$ such that ($*$)$_\mu$ holds, where
$(*)_\mu$ asserts:
\begin{enumerate}[label=(\roman*)]
 \item $\mu\leq\gamma_i\inter\OR(N_{\xi_i})$;
 \item $M|\mu=N_{\xi_i}|\mu$;
 \item if $\mu<\gamma_i$ then $\mu$ is a cardinal
of $M|\gamma_i$;
 \item if $\mu<\OR(N_{\xi_i})$ then $\mu$ is a cardinal of
$N_{\xi_i}$;
 \item $M|\mu$ is $(Y,Z)$-valid.
\end{enumerate}

Note $\mu_i$ is well defined, as ($*$)$_0$ holds, and if
$\mu$ is a limit of ordinals $\mu'$ such that ($*$)$_{\mu'}$ holds then both
$M|\mu$, $N_{\xi_i}|\mu$ are passive, and ($*$)$_\mu$ holds.

Let $(\dagger)_i$ be the statement ``$\mu_i=\min(\gamma_i,\OR(N_{\xi_i}))$''.

Suppose $(\dagger)_i$ holds. Then we set $k=i$, stop the analysis, and do
not define $\theta_k$. Note that here if
$\gamma_k<\OR(N_{\xi_k})$ then $\mu_k=\gamma_k$ is a cardinal of $N_{\xi_k}$ and
$M|\gamma_k\pins N_{\xi_k}$, so $M|\gamma_k$ is passive. Likewise if
$\OR(N_{\xi_k})<\gamma_k$.

Now suppose $(\dagger)_i$ fails. So $\mu_i$ is a cardinal of $M|\gamma_i$ and of
$N_{\xi_i}$. Let $\theta_i$ be the sup of all ordinals
$\delta+\om$ such that $\delta\in\gamma_i\inter
N_{\xi_i}$ and $M|\delta=N_{\xi_i}|\delta$ and $M|\delta$ projects to
$\mu_i$
and is $(Y,Z)$-valid at $\mu_i$.

We consider two cases. In the following if $\mu_i$ is the largest cardinal
of $M|\gamma_i$ then $(\mu_i^+)^{M|\gamma_i}$ denotes
$\gamma_i=\OR^{M|\gamma_i}$, and
likewise for $N_{\xi_i}$.

\begin{case}\label{case:terminal}
Either (i) $\theta_i=(\mu_i^+)^{M|\gamma_i}=(\mu_i^+)^{N_{\xi_i}}$,
or (ii) $M|\theta_i=N_{\xi_i}|\theta_i$ is not
$(Y,Z)$-valid.

Then let $k=i$; we have finished defining $c$.
\end{case}

\begin{case}\label{case:nonterminal}
Otherwise.

Then we will have $k>i$; we have not yet finished defining $c$.

If $\theta_i<(\mu_i^+)^{M|\gamma_i}$ then let $\gamma_{i+1}<\gamma_i$ be least
such that $\theta_i\leq\gamma_{i+1}$ and $\rho_\om^{M|\gamma_{i+1}}=\mu_i$.

If $\theta_i=(\mu_i^+)^{M|\gamma_i}$ then let $\gamma_{i+1}=\gamma_i$.

If $\theta_i<(\mu_i^+)^{N_{\xi_i}}$ then let $q\pins N_{\xi_i}$ be least such
that $N_{\xi_i}|\theta_i\ins q$ and $\rho_\om^q=\mu_i$, and let
$\xi_{i+1}<\xi_i$ be
such that $\core_\om(N_{\xi_{i+1}})=\core_0(q)$.

If $\theta_i=(\mu_i^+)^{N_{\xi_i}}$ then let $\xi_{i+1}=\xi_i$.
\end{case}

Suppose Case \ref{case:nonterminal} attains at stage $i$.
Then:
\begin{enumerate}[label=(\alph*),ref=\alph*]
\item\label{item:lexless} $(\gamma_{i+1},\xi_{i+1})<_\lex(\gamma_i,\xi_i)$.
\item $\theta_i\leq\gamma_{i+1}\leq\gamma_i$ and
$M||\theta_i=N_{\xi_i}||\theta_i=N_{\xi_{i+1}}||\theta_i$.
\item Either $\theta_i$ is a cardinal of $M|\gamma_{i+1}$ or
$\theta_i=\gamma_{i+1}$, and
either $\theta_i$ is a cardinal of $N_{\xi_{i+1}}$ or
$\theta_i=\OR(N_{\xi_{i+1}})$.
The latter follows from the universality of standard parameters and condensation
of the models of $\CC$.
\item $\mu_i\leq\mu_{i+1}$.
\item\label{item:mu_i=mu_i+1} Suppose $\mu_i=\mu_{i+1}$. Then
$\mu_{i+1}<\min(\gamma_{i+1},\OR(N_{\xi_{i+1}}))$, $(\dagger)_{i+1}$
fails, $\theta_{i+1}=\theta_i$, and $k=i+1$, but Case
\ref{case:terminal}(ii) fails at stage $i+1$. (If Case
\ref{case:terminal}(ii) attained at stage $i+1$ then it would in fact attain at
stage $i$, by universality.)
\end{enumerate}

There must be a stage $i$ at which Case \ref{case:terminal} attains, by
(\ref{item:lexless}) above. This defines $k$ and $c$. We next define $d$ and
$e$.

Let $\rhovec$ ($\sigmavec$ resp.) be the set of all $\rho$ such that for some
$i<k$,
$\rho=\mu_i$ and $\gamma_{i+1}<\gamma_i$ ($\xi_{i+1}<\xi_i$ resp.). For such
$\rho,i$ with $\rho\in\rhovec$,
let $\gamma_\rho=\gamma_{i+1}$ and $P_\rho=M|\gamma_\rho$
(so $\rho_\om(P_\rho)=\rho$).
For such $\sigma=\rho,i$ with $\sigma\in\sigmavec$,
let $\xi_\sigma=\xi_{i+1}$,
$Q_\sigma=N_{\xi_\sigma}$ and $q_\sigma$ be such that
$\core_0(q_\sigma)=\core_\om(Q_\sigma)$.
Note that
$\rhovec\un\sigmavec=\{\mu_0,\ldots,\mu_{k-1}\}$.
Finally, let
$P_0=M$,
$Q_0=N_\conlengthabstract$, and let $q_0$ be undefined.

Let $d=\left<\gamma_\rho,P_\rho\right>_{\rho\in\{0\}\un\rhovec}$,
$e=\left<q_\sigma,\xi_\sigma,Q_\sigma\right>_{\sigma\in\{0\}\un\sigmavec}$
and $\theta=\theta_k$.
This completes the definition of descent.
\end{dfn}

\begin{rem}\label{rem:distinctions}
We continue with the same notation. Let
$\rho\in\rhovec$ and $\sigma\in\sigmavec$.
We claim that $P_\rho\neq q_\sigma$. For suppose $P_\rho=q_\sigma$.
Let $P=P_\rho$. Then $\rho=\rho_\om^P=\sigma$.
We have $i<k$ such that $\rho=\mu_i$.
Also, $\theta_i=(\rho^+)^P\leq\OR^P$.
But $P\pins P_i$ and $P\pins Q_i$. Therefore $P$ is not $(Y,Z)$-valid at $\rho$, so $P|\theta_i$ is not $(Y,Z)$-valid. But then $k=i$, contradiction.
\end{rem}

\begin{rem}\label{rem:background_exists}
Suppose $(\dagger)_k$ fails. 

We have $P_k=M|\gamma_k$ and $Q_k=N_{\xi_k}$. We have
$M||\theta_k=Q_k||\theta_k$ and this model is $(Y,Z)$-valid. Note that either
$M|\theta_k$ is active or $Q_k|\theta_k$ is active.

Suppose that $M|\theta_k=Q_k|\theta_k$. Let $P=M|\theta_k$. Then $P$
is not $(Y,Z)$-valid. For otherwise, by Case \ref{case:terminal}(i), we have
$(*)_{\theta_k}$, so $\mu_k\geq\theta_k$, contradiction. So $P$ is
type 3, and $\mu_k$ is a limit cardinal of $P_k$ and of
$Q_k$. Moreover, we claim that $P=N_\xi$ for some $\xi$. For suppose $P\pins
Q_k$, and let $\xi$ be such that $\core_\om(N_\xi)=\core_0(P)$.
Then $\rho_\om^P=\mu_k$, and the core embedding $\core_0(P)\to\core_0(N_\xi)$
is in fact the identity. So if $P\neq N_\xi$ then by the initial segment
condition, $P\in N_\xi$, but then by universality, $P\in P$, contradiction.

Now suppose that Case \ref{case:terminal}(i) attains at stage $k$. Then 
if $M|\theta_k$ is active then $\theta_k=\gamma_k$,
and if $Q_k|\theta_k$ is active then $\theta_k=\OR(Q_k)$.

So in any case, $Q_k|\theta_k=N_\xi$ for some $\xi$. (If $Q_k|\theta_k$ is
passive then this is because $\mu_k$ is a cardinal of $Q_k=N_{\xi_k}$ and
$Q_k|\theta_k$ is a limit of levels projecting to $\mu_k$.)
\end{rem}

We now proceed with the construction. Let $B$ be the
neat code for $(\Tt^\alpha,\Uu\rest\alpha+1)$. Consider the
$(A,B)$-descent of $(M,(R,\CC))=(M^\alpha,(R^\alpha,\CC^\alpha))$; we use
notation as in \ref{dfn:descent}.

Suppose that $(\dagger)_k$ fails. Then the comparison has not yet succeeded.
We will specify $E^{\Tt^{\alpha+1}}_\alpha$ and $E^\Uu_\alpha$.
Exactly one of these extenders will be non-empty, with
$E^\Uu_\alpha\neq\emptyset$ if it's reasonable. This helps to organize the
analysis.
\footnote{We might have organized the comparison such that if both $M|\theta_k$
and $N_{\xi_k}|\theta_k$ are active, then
$E^{\Tt^{\alpha+1}}_\alpha=F^{M|\theta_k}$ and
$E^\Uu_\alpha=E^*_{\xi_k}$ (or $E^\Uu_\alpha=F^*_{\xi_k}$).
However, then we may get $M|\theta_k\ins
i_{E^\Uu_\alpha}(N_{\xi_k})$. If
this occurs and $M|\theta_k\neq N_{\xi_k}|\theta_k$ we would want to retract our
use of $F^{M|\theta_k}$ when defining
$\Tt^{\alpha+2}$. This is one motivation to wait longer before using an extender
in $\Ttvec$.}

Let $E=F^{M|\theta_k}$ and $F=F^{N_{\xi_k}|\theta_k}$. If $E\neq\emptyset=F$
then set $E^{\Tt^{\alpha+1}}_\alpha=E$ and
$E^\Uu_\alpha=\emptyset$.
Otherwise $F\neq\emptyset$; in this case set
$E^{\Tt^{\alpha+1}}_\alpha=\emptyset$ (even if $E\neq\emptyset$),
let $\xi$ be
such that $N_{\xi_k}|\theta_k=N_\xi$ (see \ref{rem:background_exists}) and set
$E^\Uu_\alpha$ to be either $E^*=(E^*_{\xi})^\CC$ or
$F^*=(F^*_{\xi})^\CC$,
depending on what properties we want for
$\Uu$. For \ref{thm:main1}(c)(i) we use $F^*$; for \ref{thm:main1}(c)(ii) we use
$E^*$.
In all cases also define $F^\Uu_\alpha=F$.

If $E^\Uu_\alpha\neq\emptyset$ then set $\Uu\pred(\alpha+1)$ to be the least
$\gamma\leq\alpha$ such that $E^\Uu_\gamma\neq\emptyset$ and for all
$\delta\in[\gamma,\alpha)$, $\crit(E^\Uu_\alpha)<\rho(E^\Uu_\delta)$ and
$\crit(E^\Uu_\alpha)\leq\nu(F^\Uu_\delta)$.
Note that if we are following the prescription for \ref{thm:main1}(c)(i), then
we
always have $\rho(E^\Uu_\delta)\leq\nu(F^\Uu_\delta)$, so
$\Uu$ will be non-overlapping.
If
$R\sats$``$\CC$ is normal'' and we are following the prescription for
\ref{thm:main1}(c)(ii),
then $E^*=E^\Uu_\alpha$ is such that $E^*\in\her_{|V_{\rho(E^*)}|+1}$, and for
any $\beta\leq\alpha$, if $\crit(E^\Uu_\alpha)<\rho(E^\Uu_\delta)$ for all
$\delta\in[\beta,\alpha)$, we automatically have
$\crit(E^\Uu_\alpha)\leq\nu(F^\Uu_\beta)$. So in this case, $\Uu$ will be
normalizable.\footnote{If $R\sats$``$\CC$ is not normal'' and we are aiming for
\ref{thm:main1}(c)(ii),
then the clause ``and $\crit(E^\Uu_\alpha)\leq\nu(F^\Uu_\delta)$'' in the
definition
of $\Uu\pred(\alpha+1)$ might prevent $\Uu$ from being nonoverlapping, but it
is needed for our proof to work.}

If $E^{\Tt^{\alpha+1}}_\alpha\neq\emptyset$ then we set
$\Tt^{\alpha+1}=\Tt^\alpha\conc\left<E^{\Tt^{\alpha+1}}_\alpha\right>$;
normality and $m$-maximality determine the remaining structure of
$\Tt^{\alpha+1}$.

Suppose $E^\Uu_\alpha\neq\emptyset$, so $E^{\Tt^{\alpha+1}}_\alpha=\emptyset$.
Suppose there is
$\gamma+1<\lh(\Tt^\alpha)$ such that $E^{\Tt^\alpha}_\gamma\neq\emptyset$, and
$M^\alpha||\lh(E^{\Tt^\alpha}_\gamma)\not\ins
i^{M^\Uu_\alpha}_{E^\Uu_\alpha}(N_{\xi_k})$. Let $\gamma$ be the least such. We
set
$\Tt^{\alpha+1}=(\Tt^\alpha\rest(\gamma+1))\conc\Pp$, where
$\Pp=\left<\emptyset,\emptyset,\ldots\right>$ is only padding, such that
$\lh(\Tt^{\alpha+1})=\alpha+2$. If there is no such $\gamma+1$ we set
$\Tt^{\alpha+1}=\Tt^\alpha\conc\left<\emptyset\right>$.

This completes stage $\alpha+1$ of the comparison, given that $(\dagger)_k$
fails.

\begin{rem}\label{rem:dagger_holds}
Suppose now that $(\dagger)_k$ holds. We
set $\zeta=\alpha$, and claim that the comparison has succeeded, i.e. that
$\Tt=\Tt^\zeta$ and $\Uu=\Uu\rest\zeta+1$ satisfy
\ref{i:proof_main1_a}-\ref{i:proof_main1_c}. We have either
$M|\gamma_k\ins N_{\xi_k}$ or $N_{\xi_k}\ins M|\gamma_k$. First observe that
either $\gamma_k=\gamma_0=\OR^M$ or $\xi_k=\xi_0=\conlengthabstract$. Suppose
not, so $k>0$ and
$\gamma_k<\OR^M$ and $\xi_k<\conlengthabstract$. By \ref{dfn:descent}(e),
$\mu_{k-1}<\mu_k$, so $\rho_\om(M|\gamma_k)<\mu_k$ and
$\rho_\om(N_{\xi_k})<\mu_k$. But $\mu_k$ is a cardinal of both models.
Therefore $M|\gamma_k=N_{\xi_k}$, so
$\mu=\rho_\om(M|\gamma_k)\in\rhovec\inter\sigmavec$ and
$M|\gamma_k=P_\mu=q_\mu$, contradicting \ref{rem:distinctions}.

If $M|\gamma_k\pins N_{\xi_k}$ then $\mu_k=\gamma_k$ (by $(\dagger)_k$) so
$M|\gamma_k$ is a cardinal proper segment of $N_{\xi_k}$. This gives that
$M|\gamma_k=N_\xi$ for some $\xi<\xi_k$, and
$\rho_\om^{M|\gamma_k}=\gamma_k=\gamma_0$, so in fact
$M=N_\xi$,
and $b^\Tt$
does not drop in model or degree. This completes the proof in this case. So
assume $N_{\xi_k}\ins M|\gamma_k$. If $\xi_k=\xi_0$ we are done, and this
follows if $N_{\xi_k}\pins M|\gamma_k$, as in the previous case. So we are
left with the case that $\xi_k<\xi_0$ and $N_{\xi_k}=M|\gamma_k=M$. We must
prove that $b^\Tt$ does not drop in model. We
will do this later, because to do so, and to prove that the comparison
terminates, we first need
to analyse the comparison in detail.
\end{rem}

This completes stage $\alpha+1$ of the comparison.

Given $\left<\Tt^\alpha\right>_{\alpha<\eta}$, $\eta$ a limit, let
$\Tt^{<\eta}=\lim_{\alpha\to\eta}\Tt^\alpha$. That is, $\lh(\Tt^{<\eta})=\eta$
and for all $\gamma<\eta$,
$E^{\Tt^{<\eta}}_\gamma=\lim_{\alpha\to\eta}E^{\Tt^\alpha}_\gamma$. (Note that
the sequence $\left<E^{\Tt^\beta}_\gamma\right>_{\beta\in[\gamma+1,\eta)}$ is of
the form $\Ee\conc\Pp$, where $\Ee=\left<E,E,\ldots\right>$ is constant with
$E\neq\emptyset$ (possibly $\lh(\Ee)=0$), and
$\Pp=\left<\emptyset,\emptyset,\ldots\right>$ (possibly $\lh(\Pp)=0$).) We may
have that $\Tt^{<\eta}$ is eventually only padding, but note that in this case,
$\Uu\rest\eta$ is cofinally non-padded.
Finally, let $\Tt^\eta=\Tt^{<\eta}\conc\Sigma_\mouseM(\Tt^{<\eta})$ and
$\Uu\rest\eta+1=\Uu\rest\eta\conc\Sigma_\cpmR(\Uu\rest\eta)$.

This completes the definition of the comparison.

We now work toward a proof that the comparison succeeds. For this we need to
establish some agreement conditions, by induction through $\lh(\Ttvec,\Uu)$.
First we establish some notation.

Fix $\alpha<\lh(\Ttvec,\Uu)$. With notation as in the definition of the descent
of $(M^\alpha,(R^\alpha,\CC^\alpha))$,
let $c^\alpha=c$, $\gamma^\alpha_i=\gamma_i$, $(\dagger)^\alpha_i=(\dagger)_i$,
etc. Also let $\gamma^\alpha,(\dagger)^\alpha$ denote
$\gamma^\alpha_{k_\alpha},(\dagger)^\alpha_{k_\alpha}$, etc.
If
$(\dagger)^\alpha$ fails and the stage $\alpha$ descent terminates through Case
\ref{case:terminal}(ii), let $P^\alpha_*$ denote
$P^\alpha|\theta^\alpha$. Otherwise let $P^\alpha_*$ denote $P^\alpha$. Define
$Q^\alpha_*$ similarly, and also let $\xi^\alpha_*$ be the $\xi$ such that
$Q^\alpha_*=N^\alpha_\xi$. So if $(\dagger)^\alpha$
fails then
$\mu^\alpha<\theta^\alpha=((\mu^\alpha)^+)^{P^\alpha_*}=((\mu^\alpha)^+)^{
Q^\alpha_* } $ ,
and $P^\alpha_*||\theta^\alpha=Q^\alpha_*||\theta^\alpha$.
Let $\lambda^\alpha$ be the largest
$\lambda\leq\mu^\alpha$ such that $\lambda$ is a limit
of
cardinals of $P^\alpha$ (equivalently, of $Q^\alpha$, $P^\alpha_*$, or
$Q^\alpha_*$).

Let $\eta<\lh(\Ttvec,\Uu)$ be a limit. When $\Tt^{<\eta}$ is cofinally
non-padded, let $M(\Ttvec\rest\eta)$ denote $M(\Tt^{<\eta})$; otherwise, let
$M(\Ttvec\rest\eta)$ denote $M^{\Tt^{\eta}}_\eta|\delta$, where
$\delta=\delta(\Uu\rest\eta)$.\footnote{This might involve a slight abuse of
notation, as $\delta$ need not be determined by $\Ttvec\rest\eta$ alone.}
(These coincide when $\Tt^{<\eta}$ and $\Uu\rest\eta$ are both cofinally
non-padded.)

If the comparison runs to stage $\upsilon^++1$, then we stop it there, producing
$\Tt^{\upsilon^+},\Uu\rest(\upsilon^++1)$; in this case set $\zeta=\upsilon^+$.
Otherwise we stop at the first stage $\zeta+1<\upsilon^+$ such that
$(\dagger)^\zeta$ holds,
producing final trees $\Tt^\zeta,\Uu\rest(\zeta+1)$.

Before beginning the analysis we make a couple more observations.

\begin{rem}\label{rem:E^Tt_eps_neq_emptyset}
Suppose $E^{\Tt^{\eps+1}}_\eps\neq\emptyset$. We have $\lh(E^{\Tt^{\eps+1}}_\eps)=\theta^\eps$,
and the stage $\eps$ descent does not terminate
through Case \ref{case:terminal}(ii). Therefore $\theta^\eps=\gamma^\eps$. Let
$N=M^\eps$. Then $\theta^\eps=\OR^N$ iff $\rhovec^\eps=\emptyset$. Suppose
$\rhovec^\eps\neq\emptyset$, so $k_\eps>0$. Let
$\rhovec^\eps=\{\mu^\eps_{i_0}<\ldots<\mu^\eps_{i_n}\}$, with $i_n<k_\eps$. (If
$\mu^\eps\in\rhovec^\eps$ then $\mu^\eps=\mu^\eps_{k_\eps-1}$.) Then
$\left<\lambda_0,\ldots,\lambda_n\right>=\langle\gamma^\eps_{i_n+1},\ldots,
\gamma^\eps_{i_0+1}\rangle$ is the $\gamma^\eps_{i_n+1}$-model-dropdown sequence
for
$N$ below $\OR^N$.
That is, $\lambda_0=\gamma^\eps_{i_n+1}$ and $\lambda_{i+1}$
is the least $\lambda>\lambda_i$
such that $\lambda<\OR^N$ and
$\rho_\om^{N|\lambda}<\rho_\om^{N|\lambda_i}$, with $n$ as large as possible.
Moreover, $\mu^\eps_{i_j}=\rho_\om(N|\gamma^\eps_{i_j+1})$ for each $j\leq n$.

Similar remarks apply when $E^\Uu_\eps\neq\emptyset$,
but things can be a little different, as it is possible for the stage $\eps$
descent to terminate
through Case \ref{case:terminal}(ii).\end{rem}

\begin{rem}\label{rem:strongly_generators} We will prove that $\Uu$ does not
move fine structural generators. That is, if $\alpha+1<_\Uu\beta+1$ then
$\nu(F^\Uu_\alpha)\leq\crit(E^\Uu_\beta)$. The proof of this depends on other
properties of $\Uu$, to be established inductively, by Claim \ref{clm:agmt}
below. However, if $R\sats$``$\CC$ is strongly reasonable'' then we can prove
the fact right now; the more general case is an elaboration of this argument.
Suppose otherwise. For simplicity, we assume $\Uu$ has no padding. Let
$\beta+1<\lh(\Uu)$ be least such that for some $\alpha+1<_\Uu\beta+1$, we have
$\kappa=\crit(E^\Uu_\beta)<\nu(F^\Uu_\alpha)$. Let $\gamma=\Uu\pred(\beta+1)$.
We claim that $\gamma$ is a successor. For otherwise we have $\alpha+1$ as above
with $\alpha+1<_\Uu\gamma$. By minimality of $\beta+1$,
$\nu(F^\Uu_\alpha)\leq\crit(F^\Uu_{\beta'})$ for all
$\beta'+1\in(\alpha+1,\gamma)_\Uu$. This implies
$\nu(F^\Uu_\alpha)<\rho(E^\Uu_\delta)$ and
$\nu(F^\Uu_\alpha)\leq\nu(F^\Uu_\delta)$ for all
$\delta\in[\alpha+1,\gamma)$. But then $\Uu\pred(\beta+1)<\gamma$,
contradiction. So let
$\alpha+1=\gamma$. By minimality of $\beta+1$, we have
$\kappa<\nu(F^\Uu_\alpha)$. Since $\Uu\pred(\beta+1)>\alpha$,
$\rho(E^\Uu_\alpha)\leq\kappa$. We claim that ($*$) $Q^\alpha_*\sats$``$\kappa$
is
inaccessible''. But then since $M^\Uu_\alpha\sats$``$\CC^\alpha$ is strongly
reasonable'',
$\kappa<\rho(E^\Uu_\alpha)$, contradiction.

So we prove ($*$). We have $\kappa<\rho(E^\Uu_\delta)$ for all
$\delta\in[\alpha+1,\beta)$, so $\kappa$ is measurable in $M^\Uu_{\alpha+1}$,
and therefore inaccessible in
$i^\Uu_{\eps,\alpha+1}(Q^\alpha_*|\mu)=i^{M^\Uu_\alpha}_{E^\Uu_\alpha}
(Q^\alpha_*|\mu)$, where $\mu=\crit(E^\Uu_\alpha)$ and
$\eps=\Uu\pred(\alpha+1)$.
Moreover, $\mu$ is a cardinal of $Q^\alpha_*$, so $\kappa$ is inaccessible in
$U'=i^{M^\Uu_\alpha}_{E^\Uu_\alpha}(Q^\alpha_*)$. But therefore $\kappa$ is
inaccessible in $U=\Ult_0(Q^\alpha_*,F^\Uu_\alpha)$, since
$\kappa<\nu(F^\Uu_\alpha)$ and using the factor embedding $j:U\to U'$ (see
\ref{rem:factor}). Therefore $\kappa$ is inaccessible in $Q^\alpha_*$, as
required.\end{rem}

The following claim lists various facts about the comparison, particularly how
different stages are related. Most of our work will be in giving its statement
and proof. It is proved by induction on $\iota$. Probably the most central fact
is (\ref{a:distinctions}).

\begin{clm}\label{clm:agmt}\textup{
For all $\iota\leq\zeta+1=\lh(\Ttvec,\Uu)$:
\begin{enumerate}[label=(\arabic*),ref=\arabic*]
\item\label{a:<limneat} 
Suppose $\eta\leq\alpha\leq\iota$, $\eta$ is a limit, and either $\alpha$ is a limit or $\alpha<\iota$.
Then $\Tt^\eta=\Tt^\alpha\rest\eta+1$ and
$(\Tt^\alpha,\Uu\rest\alpha+1)$ is neat. Let $B^\alpha$
denote the neat code for $(\Tt^\alpha,\Uu\rest\alpha+1)$. Then
$B^\alpha\sub(\lambda^\alpha+1)^3$. For all $\alpha_1<\alpha_2<\iota$ we have $B^{\alpha_1}=B^{\alpha_2}\inter(\lambda^{\alpha_1}+1)^3$.
\item\label{a:distinctions}
Let $\alpha,\beta<\iota$, let $\rho\in\rhovec^\alpha$ and
$\sigma\in\sigmavec^\beta$. Then $P^\alpha_\rho\neq q^\beta_\sigma$.
\item\label{a:general}
Let $\alpha\leq\beta<\iota$. Then:
\begin{enumerate}[label=(\roman*),ref=\arabic{enumi}\roman*]
\item\label{a:general(ii)} Suppose $\alpha<\beta$. Then
$\lambda^\alpha\leq\lambda^\beta$, the
models $P^\alpha$, $P^\beta$, $Q^\alpha$, $Q^\beta$ agree strictly below
$((\lambda^\alpha)^+)^{P^\alpha_*}=((\lambda^\alpha)^+)^{Q^\alpha_*}$, and
$\lambda^\alpha$ is a limit cardinal of each of these models.
\item\label{a:general(iii)} $\Tt^\alpha$ agrees with $\Tt^\beta$ in terms of use
and indexing of
extenders $E$ such that $\lh(E)<\lambda^\alpha$. That is,
$\Tt^\alpha\rest\gamma+1=\Tt^\beta\rest\gamma+1$ where $\gamma$ is least such
that either $\gamma=\alpha$ or $E^{\Tt^\alpha}_\gamma\neq\emptyset$ and
$\lambda^\alpha<\lh(E^{\Tt^\alpha}_\gamma)$; and if $\gamma\in[\alpha,\beta)$
and $E^{\Tt^\beta}_\gamma\neq\emptyset$ then
$\lambda^\alpha<\lh(E^{\Tt^\beta}_\gamma)$.
\item\label{a:general(iv)} If $\alpha+\om\leq\iota$ then there is $n<\om$ such
that
$\lambda^\alpha<\lambda^{\alpha+n}$.
\end{enumerate}
\item\label{a:Text}
Suppose $\alpha<\beta<\iota$ and $E=E^{\Tt^\beta}_\alpha\neq\emptyset$. Then:
\begin{enumerate}[label=(\roman*),ref=\arabic{enumi}\roman*]
\item\label{a:Text(i)}
$\muvec^{\alpha+1}\inter\lh(E)=\sigmavec^{\alpha+1}\inter\lh(E)$ and
$e^{\alpha+1}\rest\lh(E)=e^\alpha$
\item\label{a:Text(ii)} $\lh(E)\leq\mu^\beta$
\item\label{a:Text(iii)} $\rhovec^\beta\inter\lh(E)=\emptyset$
\item\label{a:Text(iv)} $M^\alpha||\lh(E)\pins P^\beta_*$ and
$M^\alpha||\lh(E)\ins Q^\beta_*$ and $\lh(E)$ is a cardinal of
$P^\beta_*$. If $\lh(E)=\OR(Q^\beta_*)$ then $\beta=\alpha+1$ and
$(\dagger)^\beta$.
\item\label{a:Text(v)} If $F^\Uu_\beta\neq\emptyset$ then $E\rest\nu(E)\not\sub
F^\Uu_\beta$.
\end{enumerate}
\item\label{a:Tretract}
Suppose $\alpha<\beta<\beta+1<\iota$ and $E=E^{\Tt^\beta}_\alpha\neq\emptyset$.
If $E$ is retracted at stage $\beta+1$, i.e.
$E^{\Tt^{\beta+1}}_\alpha=\emptyset$, then $E$ is the last non-empty extender
used in $\Tt^\beta$, $\lh(E)=\mu^\beta=\nu(F^\Uu_\beta)$,
$\lambda^\alpha=\lambda^\beta$ and $\beta<\alpha+\om$.
\item\label{a:Ubelow_crit}
Suppose $\alpha<_\Uu\beta<\iota$ and
$\kappa=\crit(i^\Uu_{\alpha,\beta})<\infty$. Then
$e^\beta\rest\kappa=i^\Uu_{\alpha,\beta}(e^\alpha\rest\kappa)$,
and for all $\rho\in\sigmavec^\alpha\inter\kappa$,
$q^\beta_\rho=q^\alpha_\rho$.
\item\label{a:E^UU}
Suppose $\chi+1<\iota$ and $E^\Uu_\chi\neq\emptyset$. Let
$\alpha=\Uu\pred(\chi+1)$, $\kappa=\crit(E^\Uu_\chi)$ and
$\rho=\max(\{0\}\un(\sigmavec^\alpha\inter\kappa))$. Then:
\begin{enumerate}[label=(\roman*),ref=\arabic{enumi}\roman*]
\item\label{a:E^UU(ii)} $\kappa<\OR(Q^\alpha_\rho)$,
\item\label{a:E^UU(iii)}
$\kappa\leq\lambda^\alpha$
and $Q^\alpha$, $Q^\alpha_\rho$, $Q^\chi$ agree through $\kappa$, which is a
limit cardinal of these models.
\item\label{a:E^UU(iv)} $E^\Uu_\chi$ does not move fine structural generators.
That is, given any $\gamma+1<_\Uu\chi+1$ such that $E^\Uu_\gamma\neq\emptyset$,
we have $\nu(F^\Uu_\gamma)\leq\crit(E^\Uu_\chi)$.
\end{enumerate}
Let $\mu'$ be the largest cardinal $\mu''$ of
$N=i^{M^\Uu_\chi}_{E^\Uu_\chi}(Q^\chi_*)$ such that $\mu''\leq\mu^\chi$. Let
$\theta'=((\mu')^+)^{Q^\chi_*}$.
Then:
\begin{enumerate}[resume*]
\item\label{a:E^UU(v)} $N||\theta'=Q^\chi||\theta'$.
\item\label{a:E^UU(vi)} Suppose $\mu'<\mu^\chi$. Then $F^\Uu_\chi$ is type 1 or
type
3 and $\theta'=\mu^\chi$.
\item\label{a:E^UU(vii)} $Q^{\chi+1}_\rho$ and $N$ agree through
$i^\Uu_{\alpha,\chi+1}(\kappa)=i^{M^\Uu_\chi}_{E^\Uu_\chi}(\kappa)$, a
cardinal
of these models,
\item\label{a:E^UU(viii)} $\lambda^\chi\leq\mu'\leq\mu^{\chi+1}$ and  $\mu'$ is
a
cardinal of $Q^{\chi+1}_\rho$ and of $Q^{\chi+1}$,
\item\label{a:E^UU(ix)} $\mu'\leq\min(\sigmavec^{\chi+1}\inter[\kappa,\infty))$,
\item\label{a:E^UU(x)} The models
$Q^{\chi+1}_*$,
$Q^{\chi+1}$, $Q^{\chi+1}_\rho$, $Q^\chi$ agree strictly below $\theta'$.
\end{enumerate}
\item\label{a:E^UT}
Suppose $\chi+1<\iota$ and $E^\Uu_\chi\neq\emptyset$. We use the notation of
(\ref{a:E^UU}) and let $\mu=\mu^\chi$. If there is no retraction at stage
$\chi+1$, i.e. if $\Tt^{\chi+1}=\Tt^\chi\conc\left<\emptyset\right>$, then let
$\beta=\chi$. If there is retraction, let $\beta=\gamma$, where
$E^{\Tt^\chi}_\gamma$ is the retracted extender. So
$M^{\chi+1}=M^\beta$.
\begin{enumerate}[label=(\roman*),ref=\arabic{enumi}\roman*]
\item\label{a:E^UT(i)} $\rhovec^{\chi+1}\inter\mu'=\rhovec^\beta\inter\mu'$ and
$P^{\chi+1}_{\rho'}=P^\beta_{\rho'}$ for all
$\rho'\in\{0\}\un(\rhovec^{\chi+1}\inter\mu')$.
\item\label{a:E^UT(ii)} Let $\rho'=\max(\{0\}\un(\rhovec^{\chi+1}\inter\mu'))$.
Then $\mu'$
is a cardinal of $P^{\chi+1}_{\rho'}$.
\end{enumerate}
\item\label{a:<limUpad}
Suppose $\eta\leq\iota$ is a limit such that $\Uu\rest\eta$ is eventually only
padding. So
$M^\Uu_{\eta}=\lim_{\beta<\eta}M^\Uu_\beta$. Let
$\delta=\delta(\Ttvec\rest\eta)$. Let $(c,d,e,\theta)$ be
the
$(A,B^\eta)$-descent of $(M(\Ttvec\rest\eta),(M^\Uu_{\eta},\CC^\eta))$. Let
$\delta=\delta(\Ttvec\rest\eta)$. Then:
\begin{enumerate}[label=(\roman*),ref=\arabic{enumi}\roman*]
\item\label{a:<limUpad(i)} If $\gamma\leq\beta<\eta$ are such that
$\Uu\rest\eta=\Uu\rest(\gamma+1)\conc\Pp$, then $e^\gamma\sub e^\beta$.
\item\label{a:<limUpad(ii)} $e=e^\alpha$ for all sufficiently large
$\alpha<\eta$.
Let $\alpha<\eta$ be such that $\Uu\rest\eta=\Uu\rest(\alpha+1)\conc\Pp$ and
$e^\alpha=e$. Let
$\rho=\max(\{0\}\un\sigmavec^\alpha)$. Then $\Tt^{<\eta}\rest[\alpha,\eta)$ is
given by the standard comparison of the phalanx $\Phi(\Tt^\alpha)$ with
$Q^\alpha_\rho=Q^e_\rho$.
\item\label{a:<limUpad(iii)} $\delta$ is a limit of cardinals of $Q^e_\rho$.
\end{enumerate}
\item\label{a:limUpad}
Assume the hypotheses and notation of (\ref{a:<limUpad}) and also that
$\eta<\iota$. Then:
\begin{enumerate}[label=(\roman*),ref=\arabic{enumi}\roman*]
\item\label{a:limUpad(i)} $e^\eta\rest\delta=e$, so $Q^\eta_\rho=Q^e_\rho$;
\item\label{a:limUpad(ii)} $\rhovec^\eta\inter\delta=\emptyset$;
\item\label{a:limUpad(iii)} $\delta$ is a limit of cardinals of
$M^\eta$,
$Q^\eta_\rho$, $Q^\eta$, these models agree through $\delta$, and
$\delta\leq\lambda^\eta$;
\item\label{a:limUpad(iv)} If
$\delta=\min(\OR(Q^\alpha_\rho),\OR(M^\eta))$ then
$(\dagger)^\eta$.
\end{enumerate}
\item\label{a:<limUnonpad}
Suppose $\eta\leq\iota$ is a limit and $\Uu\rest\eta$ is cofinally non-padded.
Let
\begin{equation}\label{eqn:limitsigma}
\sigmavec^{<\eta}=\bigcup_{\xi<_\Uu\eta}\sigmavec^\xi\inter\crit(i^\Uu_{\xi,\eta
}).
\end{equation}
Then there is $\xi<_\Uu\eta$ such that
$\sigmavec^{<\eta}=\sigmavec^\xi\inter\crit(i^\Uu_{\xi,\eta})$. Let $\xi$ be
such, let $\rho=\max(\{0\}\un\sigmavec^{<\eta})$ and
$\delta=\delta(\Uu\rest\eta)$. Then
$M(\Ttvec\rest\eta)=i^\Uu_{\xi,\eta}(Q^\xi_\rho)|\delta$ and $\delta$ is a limit
of cardinals of $i^\Uu_{\xi,\eta}(Q^\xi_\rho)$.
\item\label{a:limUnonpad}
Assume the hypotheses of (\ref{a:<limUnonpad}) and also that $\eta<\iota$. Fix
$\delta,\xi,\rho$ as there.
Then:
\begin{enumerate}[label=(\roman*),ref=\arabic{enumi}\roman*]
\item\label{a:limUnonpad(i)} $\sigmavec^\eta\inter\delta=\sigmavec^{<\eta}$ and
$e^\eta\rest\delta=i^\Uu_{\xi,\eta}(e^\xi\rest\kappa)$ where
$\kappa=\crit(i^\Uu_{\xi,\eta})$.
\item\label{a:limUnonpad(ii)} $\delta\leq\OR(Q^\eta_\rho)$, $\delta$ is a limit
of cardinals of
$Q^\eta_\rho$ and $\delta\leq\lambda^\eta$. If $\delta=\OR(Q^\eta_\rho)$ then
$(\dagger)^\eta$.
\item\label{a:limUnonpad(iii)}
$Q^\eta|\delta=Q^\eta_\rho|\delta=M(\Ttvec\rest\eta)=\liminf_{\gamma<\eta}
Q^\gamma$.
\item\label{a:limUnonpad(iv)} Suppose $\Tt^{<\eta}$ is cofinally non-padded.
Then $\delta$ is a limit of
cardinals of $M^\eta$, and $\rhovec^\eta\inter\delta=\emptyset$. If
$\delta=\OR(M^\eta)$ then $(\dagger)^\eta$.
\item\label{a:limUnonpad(v)}
Suppose $\Tt^{<\eta}$ is eventually only padding. Then there is $\gamma<\eta$
such that: $\Tt^{<\eta}=\Tt^\gamma\conc\Pp$ (so
$M^\eta=M^\gamma$),
$d^\gamma\rest\lambda^\gamma=d^\eta\rest\delta$, and letting
$\rho'=\max(\{0\}\un(\rhovec^\gamma\inter\lambda^\gamma))$, $\delta$ is a limit
of cardinals of $P^\gamma_{\rho'}=P^\eta_{\rho'}$ and of $P^\eta$. Moreover,
$M(\Ttvec\rest\eta)\ins P^\eta_{\rho'}$. If $\delta=\gamma^\eta_{\rho'}$ then
$(\dagger)^\eta$.
\end{enumerate}
\end{enumerate}
}
\setcounter{case}{0}
\end{clm}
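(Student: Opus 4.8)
The plan is to prove Claim~\ref{clm:agmt} by induction on $\iota$, verifying all of its parts simultaneously; the cases are $\iota\leq 1$ (immediate), $\iota$ a limit, and $\iota=\alpha_0+1$ a successor. In the successor case the only genuinely new instances are those involving the index $\alpha_0$, or $\alpha_0+1$ for the parts quantifying up to $\iota$; everything else follows from the induction hypothesis at $\iota=\alpha_0$, noting that passing from $\Tt^{\alpha_0}$ to $\Tt^{\alpha_0+1}$ only appends or retracts one extender and that $\Uu$ gains exactly one extender. So the real content is to analyse the stage-$\alpha_0$ $(A,B)$-descent of $(M^{\alpha_0},(R^{\alpha_0},\CC^{\alpha_0}))$, the extender pair $(E^{\Tt^{\alpha_0+1}}_{\alpha_0},E^\Uu_{\alpha_0})$ read off from it, and the possible retraction on the $\mouseM$-side, and then to check the new instances of (\ref{a:<limneat})--(\ref{a:limUnonpad}) roughly in the order listed, since that order reflects their dependencies. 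In the limit case the new material consists of coherence and neatness at the new limit index together with the limit parts (\ref{a:<limUpad})--(\ref{a:limUnonpad}), which I would treat after the successor analysis.

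I would begin with (\ref{a:<limneat}) and the agreement facts (\ref{a:general}). When $E^{\Tt^{\alpha_0+1}}_{\alpha_0}$ is nonempty it has length $\theta^{\alpha_0}=\gamma^{\alpha_0}$ and is strictly longer than every previously used extender, so $m$-maximality and the definition of $\lambda^\alpha$ pin down exactly how far $\Tt^\alpha$ and $\Tt^\beta$ agree, giving (\ref{a:general(iii)}); neatness of $(\Tt^{\alpha_0},\Uu\rest\alpha_0+1)$ and the bound $B^{\alpha_0}\sub(\lambda^{\alpha_0}+1)^3$ come from tracking the $\delta_\lambda$'s and observing that the neat code only records extender lengths and branch ordinals below the current agreement bound. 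Part (\ref{a:general(iv)}) is a finiteness count: each descent has finite length $k_\alpha$, and by (\ref{item:lexless}) of Definition~\ref{dfn:descent} a descent over a fixed mouse can only stall finitely often before $\lambda$ must move. Next comes the central fact (\ref{a:distinctions}), $P^\alpha_\rho\neq q^\beta_\sigma$. I would run the argument of Remark~\ref{rem:distinctions} across stages: using the agreement of $P^\alpha,P^\beta,Q^\alpha,Q^\beta$ below the common cardinal-bound of (\ref{a:general(ii)}) to reduce to the single-stage situation, $P^\alpha_\rho=q^\beta_\sigma$ would force $\rho=\rho_\om^{P^\alpha_\rho}=\sigma$ and, at the descent index with $\mu_i=\rho$, would make $P^\alpha_\rho|\theta^\alpha_i$ fail $(A,B)$-validity at $\rho$ (the same model being simultaneously a proper segment of $M^\alpha$ and a core of a level of $\CC^\beta$, so the extender-algebra axioms forced on it are incompatible with $(A,B)$); but then the descent would have halted at $i$ with $k_\alpha=i$, a contradiction. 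The same $(A,B)$-validity bookkeeping then yields (\ref{a:Text}) (placement of a retained $\Tt$-extender inside $P^\beta_*$ and $Q^\beta_*$, the bound $\lh(E)\leq\mu^\beta$, and $\rhovec^\beta\inter\lh(E)=\emptyset$) and (\ref{a:Tretract}) (an extender is retracted at stage $\beta+1$ only as the last one used, with $\lh(E)=\mu^\beta=\nu(F^\Uu_\beta)$).

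The main obstacle is part (\ref{a:E^UU}), and within it (\ref{a:E^UU(iv)}): $E^\Uu_\chi$ does not move fine-structural generators. Here I would follow the template of Remark~\ref{rem:strongly_generators}, replacing strong reasonableness by reasonableness. Supposing $\kappa=\crit(E^\Uu_\chi)<\nu(F^\Uu_\gamma)$ for some $\gamma+1<_\Uu\chi+1$ with $E^\Uu_\gamma\neq\emptyset$, one first reduces, as in that remark, to $\gamma+1=\Uu\pred(\chi+1)$, obtaining $\rho(E^\Uu_\gamma)\leq\kappa<\rho(E^\Uu_\delta)$ for all intervening $\delta$, so $\kappa$ is measurable in $M^\Uu_{\gamma+1}$ and hence, via the factor embedding of Remark~\ref{rem:factor}, measurable in $U=\Ult(V,E^\Uu_\gamma)$. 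One must then produce, for every $\xi$ below the $\eta$ associated to $N_\xi$ (the active level of $\CC^\gamma$ selected at stage $\gamma$, so that $F^\Uu_\gamma=F^{N_\xi}$), an active premouse $N'$ and an extender $E'$ witnessing the hypothesis of Definition~\ref{dfn:construction}(f) --- i.e. $\crit(E')=\kappa$, $\OR^{N'}>\xi$, either $N'\pins N_\xi$ or $N'$ agrees with $N_\xi$ through $\eta$, and $F^{N'}\rest\nu(F^{N'})\sub E'$ in $U$; these come from the fact that $\kappa$ is a fine-structural generator of $F^\Uu_\gamma$, which forces the construction below $N_\xi$ to carry active levels with critical point $\kappa$ cofinally in $\eta$, as tracked by the induction hypothesis (notably (\ref{a:E^UT})). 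Reasonableness then gives $\kappa<\rho(E^\Uu_\gamma)$, the desired contradiction. With (\ref{a:E^UU(iv)}) in hand, the remaining items of (\ref{a:E^UU}) --- the agreement of $N=i^{M^\Uu_\chi}_{E^\Uu_\chi}(Q^\chi_*)$ with $Q^\chi$ through $\theta'$, the location of $\mu'$ and $\theta'$, and the behaviour of $\sigmavec$ and $e$ across the ultrapower --- follow by routine ultrapower and factor-embedding computations, and (\ref{a:Ubelow_crit}) and (\ref{a:E^UT}) fall out alongside.

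Finally, the limit parts (\ref{a:<limUpad})--(\ref{a:limUnonpad}) use that on a stretch where $\Uu$ is eventually only padding, $\Ttvec$ is carrying out exactly the standard comparison of the phalanx $\Phi(\Tt^\alpha)$ against the fixed level $Q^\alpha_\rho$ (with $\rho=\max(\{0\}\un\sigmavec^\alpha)$), so the usual analysis of such comparisons identifies $M(\Ttvec\rest\eta)$ with $Q^e_\rho|\delta$ and shows $\delta$ is a limit of cardinals there; while on a stretch where $\Uu$ is cofinally non-padded, the continuity of the $\sigmavec^\xi$'s (from (\ref{a:Ubelow_crit}) at limit stages) and of $\liminf_{\gamma<\eta}Q^\gamma$ identifies $M(\Ttvec\rest\eta)$ with $i^\Uu_{\xi,\eta}(Q^\xi_\rho)|\delta$. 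In every case the clauses ``$(\dagger)^\eta$ holds if the relevant lengths coincide'' are then immediate from the definition of descent. Carrying this bookkeeping through all parts and all stages closes the induction.
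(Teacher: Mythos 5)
Your global plan --- induction on $\iota$, successor step as the heart of the matter, limit parts last --- matches the paper's (though the paper further splits the successor case into $\iota=\chi+2$ and $\iota=\eta+1$ for a limit $\eta$, since the inputs to those two steps are structurally quite different). There is however a genuine gap in your treatment of (\ref{a:E^UU(iv)}), which is the one place where reasonableness (Definition~\ref{dfn:construction}(f)) actually enters. After reducing to $\gamma+1=\Uu\pred(\chi+1)$ with $\rho(E^\Uu_\gamma)\leq\kappa<\nu(F^\Uu_\gamma)$, you must produce, for each $\xi<\eta=((\lambda^\gamma)^+)^{Q^\gamma_*}$, a pair $(N',E')$ lying in $\Ult(M^\Uu_\gamma,E^\Uu_\gamma)$ with $\crit(E')=\kappa$, $\OR^{N'}>\xi$, $N'||\eta=Q^\gamma_*||\eta$ (or $N'\pins Q^\gamma_*$) and $F^{N'}\rest\nu(F^{N'})\sub E'$. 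You assert these come from ``the construction below $N_\xi$ carrying active levels with critical point $\kappa$ cofinally in $\eta$, tracked by (\ref{a:E^UT})''; but (\ref{a:E^UT}) concerns the $\Tt$-side after a $\Uu$-step, $\kappa$ is the critical point of the \emph{later} extender $F^\Uu_\chi$ (not of anything appearing naturally among the levels of $\CC^\gamma$), and I see no way to get the witnesses out of $\CC^\gamma$ alone. The actual source is $F^\Uu_\chi$ itself: non-type-Z restrictions $F=F^\Uu_\chi\rest(\varsigma+1)$ with $\xi<\varsigma<\eta$ are on $\es(Q^\chi_*)$ by the initial segment condition, giving $N'=Q^\chi_*|\delta$ with $N'||\eta=Q^\gamma_*||\eta$ by (\ref{a:general(ii)}), and $E'=E^\Uu_\chi\rest(\varsigma+1)$; and the fact that $(N',E')$ is coded in $V_{\lambda^\gamma+i}^{M^\Uu_\chi}\sub V_{\lambda^\gamma+i}^{\Ult(M^\Uu_\gamma,E^\Uu_\gamma)}$ (with $i\in\{1,2\}$ depending on whether $\kappa<\lambda^\gamma$) rests on the inductive hypothesis that generators are not moved before stage $\chi$. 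Without identifying $F^\Uu_\chi$ as the source and the $V_{\lambda^\gamma+i}$-agreement as the transfer mechanism, this key step does not go through.

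A secondary issue is your treatment of (\ref{a:distinctions}). You propose to ``reduce to the single-stage situation via agreement and then use invalidity as in Remark~\ref{rem:distinctions},'' but the cross-stage case does not reduce to the single-stage one by agreement alone. The real argument first shows, from $m$-maximality and (\ref{a:Text(iii)}), that the non-padded $(\Tt^\alpha)'$ is an initial segment of $(\Tt^\beta)'$, and then --- in the hard subcase where $\Uu$ moves between stages $\alpha$ and $\beta=\chi+1$ --- shows that $P^\alpha_\rho=q^{\chi+1}_\rho$ would place the offending extender $E$ on $\es^{N'}$ for $N'=i^{M^\Uu_\chi}_{E^\Uu_\chi}(Q^\chi_*)$, so that $E$ would have been retracted at stage $\chi+1$, a contradiction. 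The retraction mechanism is precisely what makes (\ref{a:distinctions}) close at the inductive step, and your sketch does not engage with it.
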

\begin{proof}
We proceed by induction on $\iota$. We write, for example,
``(\ref{a:general(ii)})($\iota<5$)'' for (\ref{a:general(ii)}) for values of
$\iota<5$.

\begin{case} $\iota\leq 1$\end{case}
For $\iota=0$ the claim is trivial. For $\iota=1$ the only non-trivial item
is (\ref{a:distinctions}), which follows \ref{rem:distinctions}.

\begin{case} $\iota=\chi+2$\end{case}
We
must prove (\ref{a:<limneat}) for $\alpha=\chi+1$,
(\ref{a:distinctions}) for $\max(\alpha,\beta)=\chi+1$,
(\ref{a:general}),(\ref{a:Text}),(\ref{a:Ubelow_crit}) for $\beta=\chi+1$,
(\ref{a:Tretract}) for $\beta+1=\chi+1$, and (\ref{a:E^UU}),(\ref{a:E^UT}) for
$\chi+1$.

(\ref{a:E^UU(ii)}),(\ref{a:E^UU(iii)}): We have that $\kappa$ is a limit
cardinal of $M^\Uu_\alpha$ and
$Q^\alpha,Q^\alpha_\rho\in M^\Uu_\alpha$, and
$\kappa\leq\nu(F^\Uu_\alpha)<\OR(Q^\alpha_*)$, so $\kappa\leq\lambda^\alpha$,
and by choice of $\rho$, $Q^\alpha|\kappa=Q^\alpha_\rho|\kappa$ and
$\kappa<\OR(Q^\alpha_\rho)$. Applying (\ref{a:general(ii)}) to $\alpha,\chi$, we
get that $Q^\alpha|\kappa=Q^\chi|\kappa$ and $\kappa$ is a limit of cardinals of
$Q^\chi$; in fact $\kappa<\OR(Q^\chi_*)$ since $\kappa=\crit(F^\Uu_\chi)$.

(\ref{a:E^UU(iv)}): Let
$\gamma+1\leq_\Uu\alpha$ with $E^\Uu_\gamma\neq\emptyset$. Suppose that
$\kappa=\crit(E^\Uu_\chi)<\nu(F^\Uu_\gamma)$.

By (\ref{a:E^UU(iv)})($\iota=\chi+1$) and part of
\ref{rem:strongly_generators}, we may assume $\gamma+1=\alpha$, and
$\rho(E^\Uu_\gamma)\leq\kappa$. This will lead to a contradiction with the
reasonableness of $\CC^\gamma$ in $M^\Uu_\gamma$.

We need to establish the hypotheses on $\kappa$ given in
\ref{dfn:construction}(f). We will first establish the appropriate facts about
$U=\Ult(M^\Uu_\gamma,E^\Uu_\gamma)$, and then if $E^\Uu_\gamma\neq
E^*=(E^*_{\xi^\gamma_*})^{\CC^\gamma}$, deduce them about
$U'=\Ult(M^\Uu_\gamma,E^*)$.

As in \ref{rem:strongly_generators}, $\kappa$ is measurable in
$M^\Uu_{\gamma+1}$ and so in $\Ult(M^\Uu_\gamma,E^\Uu_\gamma)$. Let
$\xi<\eta=((\lambda^\gamma)^+)^{Q^\gamma_*}$. By
(\ref{a:general(ii)})($\iota=\chi+1$),
$Q^\chi||\eta=Q^\gamma||\eta=Q^\gamma_*||\eta$ and
$\lambda^\gamma$ is a limit cardinal of $Q^\chi_*$. If
$\lambda^\gamma=\nu(F^\Uu_\chi)$ let $F=F^\Uu_\chi$ and
$E=E^\Uu_\chi\rest\lambda^\gamma$. If $\lambda^\gamma<\nu(F^\Uu_\chi)$ then
there is
$\varsigma<\eta$
such that $\xi<\varsigma$ and
$F=F^\Uu_\chi\rest\varsigma+1$ is non-type
Z.
Then $F$ and $E=E^\Uu_\chi\rest\varsigma+1$ are both generated by
$\lambda^\gamma\un\{\varsigma\}$.
Moreover, by the initial segment condition, there is $\delta$ with
$F=F^{Q^\chi_*|\delta}$. Moreover, letting $N=Q^\chi_*|\delta$, either $N\pins
Q^\gamma||\eta$ or $N||\eta=Q^\gamma||\eta$.

Now we claim that $N,E\in\Ult(M^\Uu_\gamma,F^\Uu_\gamma)$ and $E$ is an extender
there. If $\kappa<\lambda^\gamma$ let $i=1$; if $\kappa=\lambda^\gamma$ let
$i=2$. Then $(N,E)$ is coded by an element of
\begin{equation}\label{eqn:Vagmt}V_{\lambda^\gamma+i}^{M^\Uu_\chi}\sub
V_{\lambda^\gamma+i}^{M^\Uu_{\gamma+1}}=
V_{\lambda^\gamma+i}^{\Ult(M^\Uu_\gamma,E^\Uu_\gamma)}. \end{equation}

To see line (\ref{eqn:Vagmt}), suppose first $i=1$. Then for every
$\delta\in[\gamma,\chi)$,
$\lambda^\gamma\leq\lambda^\delta\leq\nu(F^\Uu_\delta)$, and
$F^\Uu_\delta\rest\nu(F^\Uu_\delta)\sub E^\Uu_\delta$, so
$i_{E^\Uu_\delta}(\crit(E^\Uu_\delta))\geq\lambda^\gamma$. This gives
(\ref{eqn:Vagmt}) in this case. If $i=2$ then
$\lambda^\gamma<i_{E^\Uu_\delta}(\crit(E^\Uu_\delta))$ for every such $\delta$,
which suffices. Now in either case,
$V_{\kappa+1}(M^\Uu_{\gamma+1})=V_{\kappa+1}(M^\Uu_\chi)$, so $E$ is an extender
in $M^\Uu_{\gamma+1}$ and in $\Ult(M^\Uu_\gamma,F^\Uu_\gamma)$, as required.

Finally, suppose $E^*\neq E^\Uu_\gamma$. So we are following the prescription
for \ref{thm:main1}(c)(i), and $E^\Uu_\gamma=E^*\rest\beta$ for some
$\beta\geq\nu(F^\Uu_\gamma)$. So we have a fully elementary $j:U\to
U'=\Ult(M^\Uu_\gamma,E^*)$ with $\crit(j)\geq\beta$. So $j(\kappa)=\kappa$ and
$\kappa$ is measurable in $U'$. Moreover, fixing $\xi,N,E$ as above, $N'=j(N)$
and $E'=j(E)$ witness
\ref{dfn:construction}(f) with respect to $\xi$.

Now since $M^\Uu_\gamma\sats$``$\CC^\gamma$ is reasonable'', we have that
$\kappa<\rho(E^*)$, so $\kappa<\rho(E^\Uu_\gamma)$, contradiction.

(\ref{a:E^UU(v)}),(\ref{a:E^UU(vi)}): Let
$i=i^{M^\Uu_\chi}_{E^\Uu_\chi}$ and $j:\Ult_0(Q^\chi_*,F^\Uu_\chi)\to
N=i(Q^\chi_*)$
the factor map. 
Now $\mu^\chi$ is the largest cardinal of $Q^\chi_*$, so is a cardinal of
$\Ult_0(Q^\chi_*,F^\Uu_\chi)$.
So if $\crit(j)\geq\lh(F^\Uu_\chi)$ or $\mu^\chi$ is a limit cardinal of
$Q^\chi_*$ then $\mu'=\mu^\chi$, and condensation (and that $Q^\chi_*\ins
Q^\chi$) gives (\ref{a:E^UU(v)}). Suppose
$\crit(j)=\mu^\chi$ is a successor cardinal of $Q^\chi$. Then
$Q^\chi_*=Q^\chi$ and $\mu^\chi$ is not
a cardinal of $N$,
so $\mu'<\mu^\chi$ and $\mu'$ is the largest cardinal of $N$ which is
$<\mu^\chi$. Since $\crit(j)\geq\nu(F^\Uu_\chi)$, $F^\Uu_\chi$ is
either type 1 or type 3.
Moreover, $Q^\chi||\OR(Q^\chi)=U|\OR(Q^\chi)$ and $U|\mu^\chi=N||\mu^\chi$,
so $\mu'$ is the largest cardinal of $Q^\chi$ which is $<\mu^\chi$, so
$\mu^\chi=((\mu')^+)^{Q^\chi}=\theta'$.

(\ref{a:Tretract}): Assume $\beta+1=\chi+1$ and $\alpha<\chi$ is such that
$E=E^{\Tt^\chi}_\alpha\neq\emptyset$ is retracted at stage $\chi+1$, i.e.
$E^{\Tt^{\chi+1}}_\alpha=\emptyset$. We use (\ref{a:Text})($\iota=\chi+1$). By
(\ref{a:Text(ii)}), $\lh(E)\leq\mu^\chi$. By (\ref{a:E^UU(v)}),
with $N$ as there, $M^\chi||\mu^\chi=N||\mu^\chi$. But $N|\lh(E)\neq
M^\alpha||\lh(E)=M^\chi|\lh(E)$, since $E$ is being
retracted. So $\lh(E)=\mu^\chi$ and $N|\mu^\chi$ is active. By
(\ref{a:E^UU(v)}), $\mu'<\mu^\chi$, so $\mu^\alpha=\mu'$, and by
(\ref{a:Text(ii)}), $E$ is the last extender used in $\Tt^\chi$. By
(\ref{a:E^UU(vi)}), $\mu'=\nu(F^\Uu_\chi)$.

The fact that $\lambda^\chi=\lambda^\alpha$ follows by
(\ref{a:Text(iv)}) and since $\mu^\chi=\lh(E)$. By
(\ref{a:general(iv)})($\iota=\chi$) this implies $\chi<\alpha+\om$.

(\ref{a:<limneat}): We may assume that $\alpha=\chi+1$ and $\eta$ is the largest limit $\leq\chi$. Let $\delta=\delta(\Tt^\eta\rest\eta)$. Then $\delta\leq\lambda^\eta\leq\lambda^\chi$ by (\ref{a:limUnonpad(ii)}),(\ref{a:limUpad(iii)}),(\ref{a:general})($\iota=\chi+1$). So the property follows from (\ref{a:Tretract}).

(\ref{a:Ubelow_crit}): We may assume $\beta=\chi+1$ and
$\alpha=\Uu\pred(\chi+1)$.

It suffices to prove ($*$)
$\sigmavec^{\chi+1}\inter\kappa=\sigmavec^\alpha\inter\kappa$ and for all
$\rho\in\sigmavec^{\chi+1}\inter\kappa$,
$\xi^{\chi+1}_\rho=i^\Uu_{\alpha,\chi+1}(\xi^\alpha_\rho$); and letting
$\gamma\leq\chi$ be such that $\Tt^{\chi+1}=\Tt^\gamma\conc\Pp$, we have
$M^{\chi+1}=M^\gamma$,
$\rhovec^{\chi+1}\inter\kappa=\rhovec^\gamma\inter\kappa$, and for all
$\rho\in\rhovec^{\chi+1}\inter\kappa$,
$\gamma^{\chi+1}_\rho=\gamma^\gamma_\rho$.

Since $\kappa<\mu^\chi$, by (\ref{a:Tretract}) and
(\ref{a:general(ii)})($\iota=\chi+1$): ignoring padding, either
$\Tt^{\chi+1}=\Tt^\chi$ or $\Tt^{\chi+1}\conc\left<E\right>=\Tt^\chi$ for some
$E$ such that $\kappa<\lh(E)$; and $\kappa\leq\lambda^\gamma,\lambda^\alpha$,
and
$P^\gamma|\kappa=Q^\chi|\kappa=Q^\alpha|\kappa$ and $\kappa$ is a cardinal of
$P^\gamma$, $Q^\chi$ and $Q^\alpha$. Therefore also
$M^{\chi+1}|\kappa=M^\chi|\kappa=Q^\chi|\kappa$.
Moreover, by (\ref{a:<limneat}),
$B^{\chi+1}$, $B^\chi$, $B^\alpha$ and $B^\gamma$ have the same intersection
with
$\kappa^3$,
and $Q^\chi|\kappa$ is $(A,B^{\chi})$-valid.

Since $\kappa<\OR(P^\gamma)$, for all $\rho\in\rhovec^\gamma$, we have
$\kappa<\gamma^\gamma_\rho$. Now by (\ref{a:distinctions})($\iota=\chi+1$), for
all $\rho\in\rhovec^\gamma\inter\sigmavec^\alpha$, $P^\gamma_\rho\neq
q^\alpha_\rho$. This will give the claim, by induction through
$(d,e)^{\chi+1}\rest\kappa$. That is, we have $P^{\chi+1}_0=P^\gamma_0$
and $\xi^{\chi+1}_0=i^\Uu_{\alpha,\chi+1}(\xi^\alpha_0)$. Since
$\kappa=\crit(E^\Uu_\chi)$, $Q^{\chi+1}_0$ and $Q^\alpha_0$ agree below
$\kappa$, and have the same cardinals below $\kappa$.
Assume
$\left<\mu_0,\ldots,
\mu_j\right>=(\sigmavec^\alpha\un\rhovec^\gamma)\inter\kappa\neq\emptyset$; the
contrary case is simpler.
Note that $\mu_0<\kappa$ is a cardinal in both $P^{\chi+1}_0$ and
$Q^{\chi+1}_0$. If $\mu_0\in\rhovec^\gamma\cut\sigmavec^\alpha$ then
$Q^{\chi+1}_0$, $Q^\alpha$, $Q^\chi$, $P^\chi$ and $P^\gamma_{\mu_0}$ agree
beyond their common value $\mu^*$ for $\mu_0^+$,
and $Q^\alpha|\mu^*$ is
$(A,B^\alpha)$-valid,
but $P^\gamma_{\mu_0}\pins P^\gamma_0$
and $P^\gamma_{\mu_0}$ projects to $\mu_0$. So
$\mu^{\chi+1}_0=\mu_0\in\rhovec^{\chi+1}\cut\sigmavec^{\chi+1}$ and
$\gamma^{\chi+1}_1=\gamma^{\chi+1}_{\mu_0}=\gamma^\gamma_{\mu_0}$, and
$\xi^{\chi+1}_1=\xi^{\chi+1}_0$. If $\mu_0\in\sigmavec^\alpha\cut\rhovec^\gamma$
it's similar, noting that $q^\alpha_{\mu_0}\pins Q^\alpha_0|\kappa$ because
$\kappa$ is a cardinal of $M^\Uu_\alpha$, so
$q^{\chi+1}_{\mu_0}=q^\alpha_{\mu_0}$. If
$\mu_0\in\rhovec^\gamma\inter\sigmavec^\alpha$ then use that
$P^\gamma_{\mu_0}\neq q^\alpha_{\mu_0}$. Now iterate this argument through to
$\mu_j$,
resulting in $\gamma^{\chi+1}_{j+1}=\gamma^\gamma_\rho$ where
$\rho=\max(\{0\}\un(\rhovec^\gamma\inter\kappa))$, and
$\xi^{\chi+1}_{j+1}=i^\Uu_{\alpha,\chi+1}(\xi^\alpha_\sigma)$ where
$\sigma=\max(\{0\}\un(\sigmavec^\alpha\inter\kappa))$. Then
$P^{\chi+1}_{j+1}|\kappa=Q^\chi|\kappa=Q^{\chi+1}_{j+1}|\kappa$, and $\kappa$ is
a cardinal of both $P^{\chi+1}_{j+1}$ and $Q^{\chi+1}_{j+1}$, so
$\kappa\leq\mu^{\chi+1}_{j+1}$. This proves ($*$).

(\ref{a:E^UU(vii)}): This follows from the fact that
$Q^{\chi+1}_\rho=i^\Uu_{\alpha,\chi+1}(Q^\alpha_\rho)$ (just shown), and that
$Q^\alpha_\rho|\kappa=Q^\chi_*|\kappa$, and $\kappa$ is a cardinal of $Q^\chi_*$
and
of $Q^\alpha_\rho$. 

(\ref{a:E^UU(viii)})-(\ref{a:E^UU(x)}); (\ref{a:E^UT}): By
(\ref{a:E^UU(v)}),(\ref{a:E^UU(vii)}), we have that $\mu'$ is a cardinal of
$Q^{\chi+1}_\rho$ and $Q^{\chi+1}_\rho||\theta'=Q^\chi||\theta'$ (this gives
part
of (\ref{a:E^UU(x)})).
We proved, in the argument for (\ref{a:Ubelow_crit}), that (\ref{a:E^UT(i)})
holds with ``$\kappa$'' replacing ``$\mu'$''. If there is no retraction things
are easier; assume otherwise, so $\gamma<\chi$, and by (\ref{a:Tretract}),
$\lh(E)=\mu^\chi$ where $E=E^{\Tt^\chi}_\gamma$, and
$\mu^\gamma=\mu'<\mu^\chi=\theta'$. Let
$\rho'=\max(\{0\}\un(\rhovec^\gamma\inter\mu'))$. So $E$ is active on
$P^\gamma_*\ins P^\gamma_{\rho'}$ and $\mu'$ is a cardinal of $P^\gamma_{\rho'}$
(which will give (\ref{a:E^UT(ii)})). Moreover, $P^\gamma_{\rho'}||\lh(E)$ is a
cardinal segment of $Q^\chi$, so
$P^\gamma_{\rho'}||\mu^\chi=Q^{\chi+1}_\rho||\mu^\chi$ (which will give
(\ref{a:E^UU(x)})).
Also, $B^{\chi+1}=B^\chi$ and $Q^\chi|\mu^\chi$ is $(A,B^\chi)$-valid.
Now an induction through $(d,e)^{\chi+1}\rest\mu'$ like for
(\ref{a:Ubelow_crit}) gives (\ref{a:E^UT(i)}) and (\ref{a:E^UU(ix)}),
and the observations above give (\ref{a:E^UT(ii)}) and
(\ref{a:E^UU(viii)}),(\ref{a:E^UU(x)}).

(\ref{a:Text(i)}): Assume $\alpha=\chi$ for non-triviality. An argument like for
(\ref{a:Ubelow_crit}) works, using the facts: $B^{\chi+1}=B^\chi$; 
and $E^\Uu_\chi=\emptyset$, so $M^\Uu_{\chi+1}=M^\Uu_\chi$ and
$\CC^{\chi+1}=\CC^\chi$; and
$E=E^{\Tt^{\chi+1}}_\chi\neq\emptyset$, $\theta^\chi=\lh(E)$ is a
cardinal of
$N=M^{\chi+1}$, and $N|\theta^\chi=Q^\chi|\theta^\chi$ is
passive.

(\ref{a:Text(ii)})-(\ref{a:Text(iv)}): If $\alpha=\chi$, use (\ref{a:Text(i)})
and the facts above.
Suppose $\alpha<\chi$. If $E^{\Tt^{\chi+1}}_\chi\neq\emptyset$ then
$\lh(E^{\Tt^{\chi+1}}_\chi)>\lh(E)$, and (\ref{a:Text})($\iota=\chi+1$) implies
the result. Suppose instead $E^\Uu_\chi\neq\emptyset$. By
(\ref{a:Text(ii)})($\iota=\chi+1$), $\lh(E)\leq\mu^\chi$.
By (\ref{a:E^UU}), defining $\mu',\rho$ as there, $\mu'\leq\mu^{\chi+1}$,
$Q^{\chi+1}_\rho|\mu'=Q^\chi|\mu'$ and $\mu'$ is a cardinal of $Q^{\chi+1}_\rho$
and $Q^\chi$. Let $N=M^{\chi+1}$. Since $\Tt^{\chi+1}$ uses $E$,
$\lh(E)$ is a cardinal of $N$. By (\ref{a:Text(iv)})($\iota=\chi+1$),
$N|\lh(E)\pins Q^\chi_*$ and $\lh(E)$ is a cardinal of $Q^\chi$. If
$\lh(E)\leq\mu'$ this suffices. Assume $\mu'<\lh(E)=\mu^\chi$. Since $E$ was not
retracted, by (\ref{a:E^UU}),
$Q^{\chi+1}_\rho|\lh(E)=N|\lh(E)$, these are passive, but since $\mu'<\lh(E)$,
$\lh(E)$ is not a cardinal of $Q^{\chi+1}_\rho$. Therefore
$\mu'\in\sigmavec^{\chi+1}\cut\rhovec^{\chi+1}$, and
$((\mu')^+)^{q^{\chi+1}_{\mu'}}=\lh(E)\in q^{\chi+1}_{\mu'}$, which gives the
result.

(\ref{a:Text(v)}): This follows from (\ref{a:Text(iv)}) and the initial segment
condition.

(\ref{a:general}): For (\ref{a:general(ii)})
we may assume $\alpha=\chi$; use
(\ref{a:Text(ii)}),(\ref{a:Text(iv)}) and
(\ref{a:E^UU(viii)}),(\ref{a:E^UU(x)}). For
(\ref{a:general(iii)}) we may assume $\alpha=\chi$. If
$E=E^{\Tt^{\chi+1}}_\chi\neq\emptyset$ then
$\Tt^{\chi+1}=\Tt^\chi\conc\left<E\right>$ and $\lambda^\chi<\lh(E)$. If $E$ is
an extender retracted at $\chi+1$ then by (\ref{a:Tretract}),
$\lambda^\chi<\lh(E)$. Part (\ref{a:general(iv)}) is trivial by induction.

(\ref{a:distinctions}): Suppose otherwise. We may assume
$\max(\alpha,\beta)=\chi+1$. By \ref{rem:distinctions}, $\alpha\neq\beta$. Let
$P=P^\alpha_\rho=q^\beta_\sigma$. We have
$\rho=\rho_\om^P=\sigma$.
Let
$(\Tt^\alpha)',(\Tt^\beta)'$ be the non-padded trees equivalent to
$\Tt^\alpha,\Tt^\beta$. We claim that $(\Tt^\alpha)'=(\Tt^\beta)'\rest\gamma+1$
for some $\gamma+1<\lh((\Tt^\beta)')$, and in fact $\gamma$ is least such that
$(\rho^+)^P\leq\lh(E^{(\Tt^\beta)'}_\gamma)\leq\OR^P$.

To see this, first note that $P\pins M^\alpha$,
and
$(\Tt^\alpha)'$ is
$m$-maximal and via $\Sigma_\mouseM$, and by (\ref{a:Text(iii)}),
$\lh(E)\leq\rho$ for
each $E$ used by $(\Tt^\alpha)'$, and $P|\rho\ins\I^{(\Tt^\alpha)'}$. Now
$(\Tt^\alpha)'$ is the unique non-padded tree
satisfying these conditions. But since $q^\beta_\rho=P$,
$M^\beta||(\rho^+)^P=P||(\rho^+)^P$, so
$(\Tt^\alpha)'\ins(\Tt^\beta)'$, and for any
$E\neq\emptyset$ used by $\Tt^\beta$ but not by $\Tt^\alpha$, we have
$(\rho^+)^P\leq\lh(E)$.
Now let us show that $(\Tt^\beta)'\neq(\Tt^\alpha)'$, and letting $\gamma$ be
least such that
$E=E^{(\Tt^\beta)'}_\gamma$
is not used in $(\Tt^\alpha)'$, we
have $\lh(E)\leq\OR^P$. Suppose otherwise. Then $P\pins M^\beta=P^\beta_0$, and
letting
$i<k_\beta$
be least such that $\mu_i^\beta=\rho$, we have
$\gamma^\beta_i\geq(\rho^+)^P$, but then $P\pins P^\beta_i$
(if $j\leq i$ then either $P^\beta_j=P^\beta_0$ or
$\rho_\om(P^\beta_j)<\mu^\beta_i$). But $P=q^\beta_\rho\pins Q_i^\beta$, and
$\theta^\beta_i=(\rho^+)^P$. Therefore $P$ is
not $(A,B^\beta)$-valid. So
$k_\beta=i$,
contradiction.

Now let $\gamma$ be least such that $\rho<\lh(E^{\Tt^\beta}_\gamma)$. Then
$\rho\in\rhovec^\gamma$ and $P^\gamma_\rho=P$. This is by similar reasoning to
that in the previous paragraph.

So we may assume $\gamma=\alpha<\beta=\chi+1$. Now suppose
$\Uu\rest\chi+2=\Uu\rest(\gamma+1)\conc\Pp$, so $M^\Uu_{\chi+1}=M^\Uu_\gamma$.
By (\ref{a:Text(i)}) and (\ref{a:<limUpad}),(\ref{a:limUpad})($\iota=\chi+1$),
$\sigmavec^\gamma=\sigmavec^{\chi+1}\inter\lh(E)$, and
$\xi^{\chi+1}_\rho=\xi^\gamma_\rho$ for all $\rho\in\sigmavec^\gamma$. But
$\rho<\lh(E)$ and if $\rho\in\sigmavec^\gamma$ then $q^\gamma_\rho\neq
P^\gamma_\rho$, but $q^{\chi+1}_\rho=P^\gamma_\rho$, contradiction. So
$E^\Uu_\eps\neq\emptyset$ for some $\eps\in(\gamma,\chi]$. Let $\eps$ be least
such that $\gamma<\eps$, $E^\Uu_\eps\neq\emptyset$, $\eps+1\leq_\Uu\chi+1$, and
let $\delta=\Uu\pred(\eps+1)$. So either $\delta<\gamma$ or $\delta=\gamma'$,
where $\gamma'>\gamma$ is least such that $E^\Uu_{\gamma'}\neq\emptyset$. Now
$\kappa=\crit(i^\Uu_{\delta,\chi+1})\leq\rho$, by (\ref{a:Ubelow_crit}) and
(\ref{a:distinctions})($\iota=\chi+1$). Also
$\rho<\lh(E)\leq\mu^\eps\leq\nu(F^\Uu_\eps)$. So if $\eps<\chi$ then
$\rho<\crit(
i^\Uu_{\eps+1,\chi+1})$ by (\ref{a:E^UU(iv)}), so
$\rho\in\sigmavec^{\eps+1}$ and $q^{\eps+1}_\rho=q^{\chi+1}_\rho$, contradicting
(\ref{a:distinctions})($\iota=\chi+1$). So $\eps=\chi$. So by (\ref{a:E^UU}),
since $\kappa\leq\rho$ and $\rho\in\sigmavec^{\chi+1}$,
we have $\rho\geq\mu'$, where $\mu'$ is defined as there. But
$\rho<\lh(E)\leq\mu^\chi$, so $\rho=\mu'<\mu^\chi$.
Now let $\rho'=\max(\{0\}\un\sigmavec^\delta\inter\kappa)$. Then by
(\ref{a:E^UU}), $Q^{\chi+1}_{\rho'}=i^\Uu_{\delta,\chi+1}(Q^\delta_{\rho'})$ and
$q^{\chi+1}_\rho\pins Q^{\chi+1}_{\rho'}$, and so $q^{\chi+1}_\rho\pins
N'=i^{M^\Uu_\chi}_{E^\Uu_\chi}(Q^\chi_*)$.
But $P^\gamma_\rho=q^{\chi+1}_\rho$, and $E$ is on $\es_+(P^\gamma_\rho)$, and
so
on $\es^{N'}$. Therefore $E$ should have been retracted and not used in
$\Tt^{\chi+1}$, contradiction.

It is particularly in order to deal with the preceding situation that we use
retraction of extenders.

\begin{case} $\iota$ is a limit $\eta$.\end{case}

We must prove (\ref{a:general(iv)})($\iota=\alpha+\om$) and
(\ref{a:<limneat}),(\ref{a:<limUpad}),(\ref{a:<limUnonpad})($\iota=\eta$).

(\ref{a:<limneat}): We omit the proof.

(\ref{a:general(iv)}): Let $\delta=\delta(\Ttvec\rest\alpha+\om)$. Then
$\delta$ is a limit of limit cardinals of $M(\Ttvec\rest\alpha+\om)$, since
either $\Tt^{<\alpha+\om}$, or $\Uu\rest\alpha+\om$, is cofinally non-padded,
and
in the case that $\Tt^{<\alpha+\om}$ is eventually only padding, if
$\gamma<_\Uu\alpha+\om$ and $\kappa=\crit(i^\Uu_{\gamma,\alpha+\om})$, then
$\kappa\leq\lambda^\gamma$ is a limit cardinal of $M^\Uu_\gamma$ and
$Q^\gamma$,
and so by (\ref{a:general(ii)})($\iota<\eta$), $Q^\gamma|\kappa\ins
M(\Ttvec\rest\alpha+\om)$ and $\kappa$ is a limit cardinal there. Now,
$\lambda^\alpha\leq\lambda^{\alpha+n}\leq\mu^{\alpha+n}$ for all $n<\om$, by
(\ref{a:general(ii)}). Now suppose $\Uu\rest\alpha+\om$ is cofinally non-padded.
Then for every $\chi\in[\alpha,\alpha+\om)$ such that $E^\Uu_\chi\neq\emptyset$,
we have $\lambda^\alpha\leq\lh(F^\Uu_\chi)$. Since $M^\Uu_{\alpha+\om}$ is
wellfounded, there is such a $\chi<\alpha+\om$ with
$\lambda^\alpha<\crit(F^\Uu_\chi)\leq\lambda^\chi$.
It's similar if $\Tt^{<\alpha+\om}$ is cofinally non-padded.

(\ref{a:<limUpad}): This follows
(\ref{a:Text}),(\ref{a:<limUpad}),(\ref{a:limUpad})($\iota<\eta$)
and (\ref{a:<limneat})($\iota=\eta$).
Prove
(\ref{a:<limUpad(i)}) first; the others follow. (Note any descent has finite
length.)

(\ref{a:<limUnonpad}): This follows
(\ref{a:Ubelow_crit}),(\ref{a:general(ii)})($\iota<\eta$).

\begin{case} $\iota=\eta+1$ for a limit $\eta$.\end{case}

We must prove (\ref{a:general}),(\ref{a:Text}),(\ref{a:Ubelow_crit}) with
$\beta=\eta$, (\ref{a:distinctions}) with $\max(\alpha,\beta)=\eta$, and
(\ref{a:limUpad}),(\ref{a:limUnonpad}).

(\ref{a:limUpad}):
Let $\alpha$ be as in (\ref{a:<limUpad(ii)}). Then $\delta$ is a limit of cardinals of $M^\eta$, and of $Q^\alpha_\rho=Q^\alpha$,
since $\lh(E)$ is a cardinal of $Q^\alpha$ for each extender $E$
used by $\Tt^{<\eta}$, by (\ref{a:Text})($\iota=\eta$) and
(\ref{a:<limUpad}). Moreover,
$M^\eta|\delta=Q^\alpha|\delta$
by (\ref{a:<limUpad}). This gives the result.

(\ref{a:limUnonpad}):
We assume that $\Tt^{<\eta}$ is eventually only padding as the contrary case is
easier. 
However, there still may be cofinally many $\alpha<\eta$ such that
$E^{\Tt^{\alpha+1}}_\alpha\neq\emptyset$. We prove most of
(\ref{a:limUnonpad(v)}) and omit
the rest. Let $\gamma_0$ be least such that
$\Tt^{<\eta}=\Tt^{<\eta}\rest(\gamma_0+1)\conc\Pp$.
Let $A_0$ be the set of all $\beta\in[\gamma_0,\eta)$ such that
$\Tt^\beta=\Tt^{\gamma_0}\conc\Pp$. Then $A_0$ is cofinal in $\eta$. Let
$N=M^{\gamma_0}$. For each $\beta\in A_0$,
$M^\beta=N$. For $\beta_1<\beta_2$ with $\beta_1,\beta_2\in A_0$ we
have $d^{\beta_1}\rest\lambda^{\beta_1}=d^{\beta_2}\rest\lambda^{\beta_2}$. This
follows by induction on $\beta_2$, using
(\ref{a:E^UT(i)}),(\ref{a:general(ii)}),(\ref{a:limUnonpad(v)})($\iota<\eta$)
(note (\ref{a:limUnonpad(v)}) applies at every limit $\eta'\in(\gamma_0,\eta)$
as $\Tt^{<\eta'}=\Tt^{\gamma_0}\conc\Pp$).
So there is $\gamma\in A_0$ such that
$\rhovec^{\beta}\inter\lambda^{\beta}\sub\lambda^{\gamma}$ for all $\beta\in
A_0\inter[\gamma,\eta)$. But $\delta=\sup_{\beta<\eta}\lambda^\beta$, by
(\ref{a:general(ii)}),(\ref{a:general(iv)}).
It follows that $\gamma$ is as required. Now
use an argument like that for
(\ref{a:Ubelow_crit}); we omit the details.

(\ref{a:distinctions}): Suppose $P^\alpha_\rho=q^\beta_\rho$. By the argument
for (\ref{a:distinctions}) in the ``$\iota=\chi+2$'' case, we may assume
$\alpha<\beta=\eta$, and that argument shows that $\Tt^\eta$ uses some extender
$E$ such that $\rho<\lh(E)$. Therefore $\rho<\delta(\Ttvec\rest\eta)$. But then
by (\ref{a:limUpad(i)}),(\ref{a:limUnonpad(i)}), $q^\eta_\rho=q^\xi_\rho$ for
some $\xi<\eta$. So $P^\alpha_\rho=q^\xi_\rho$, contradicting
(\ref{a:distinctions})($\iota=\max(\alpha+1,\xi+1)$).

(\ref{a:general}),(\ref{a:Text}),(\ref{a:Ubelow_crit}): We omit the proof.

This completes the proof of Claim \ref{clm:agmt}.
\end{proof}

We can now show that the construction works.

\begin{clm}\label{clm:termination} The comparison terminates at some stage
$\zeta<\theta^+$.\end{clm}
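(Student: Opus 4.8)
The plan is to argue by contradiction: suppose the comparison does not terminate at any stage $<\theta^+$, where $\theta$ is the ordinal of the claim (so $\mouseM,\cpmR\in\Her_{\theta^+}$, hence $|\mouseM|,|\construction|\leq\theta$, and we may take $\theta^+\leq\upsilon^+$, so $\Sigma_\mouseM$ and $\Sigma_\cpmR$ do carry the comparison through stage $\theta^+$, producing $\Tt^{\theta^+}$ and $\Uu\rest(\theta^++1)$ with wellfounded last models); our assumption is that $(\dagger)^\beta$ fails for every limit $\beta<\theta^+$. First I would locate a club of ``caught-up'' stages. By Claim \ref{clm:agmt}(\ref{a:general}), $\langle\lambda^\alpha\rangle_{\alpha<\theta^+}$ is non-decreasing and strictly increases over every interval of length $\om$, and $\lambda^\alpha\leq\OR(M^\alpha)<\theta^+$ since $M^\alpha$ is a model of a length-$(\alpha+1)$ tree on $\mouseM$. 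Hence $\sup_{\alpha<\theta^+}\lambda^\alpha=\theta^+$, and the set of limit $\eta<\theta^+$ closed under $\beta\mapsto\lambda^\beta$ and $\beta\mapsto\mu^\beta$ with $\sup_{\beta<\eta}\lambda^\beta=\eta$ contains a club $C$.

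Fix $\eta\in C$ and write $\delta=\delta(\Ttvec\rest\eta)$, so $\delta=\sup_{\beta<\eta}\lambda^\beta=\eta$. By parts (\ref{a:<limneat}),(\ref{a:general(ii)}),(\ref{a:<limUpad}),(\ref{a:<limUnonpad}),(\ref{a:limUpad}),(\ref{a:limUnonpad}) of Claim \ref{clm:agmt} (applied with $\iota=\eta+1$), the common part $M(\Ttvec\rest\eta)$ has ordinal height $\eta$, is an initial segment of each of $M^\eta$, $N^\eta_{\conlength^\eta}$ and the relevant $Q$-structure $Q^\eta_\rho$, the ordinal $\eta$ is a limit cardinal of each of these, and (validity of the common part being maintained throughout, as is used repeatedly in the proof of Claim \ref{clm:agmt}) $M(\Ttvec\rest\eta)$ is $(A,B^\eta)$-valid. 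Thus $(*)_\eta$ of \ref{dfn:descent} holds at $i=0$ for the $(A,B^\eta)$-descent of $(M^\eta,(R^\eta,\CC^\eta))$, so $\mu^\eta_0\geq\eta$. Now by Claim \ref{clm:agmt}(\ref{a:limUpad(iv)}),(\ref{a:limUnonpad(ii)}),(\ref{a:limUnonpad(iv)}),(\ref{a:limUnonpad(v)}), the assertion $(\dagger)^\eta$ holds as soon as $\delta=\eta$ equals one of $\OR(M^\eta)$, $\OR(Q^\eta_\rho)$ (or $\OR(Q^\alpha_\rho)$ for the $\alpha<\eta$ of (\ref{a:<limUpad(ii)})), or $\gamma^\eta_{\rho'}$ — i.e. as soon as one of the two sides ``runs out'' at height $\eta$. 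So it suffices to produce a single such $\eta\in C$.

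To find one I would reflect, following the classical coiteration-length argument. Choose $X\prec H_\Xi$ for $\Xi$ large, with $\mouseM,\cpmR,\construction,\Sigma_\mouseM,\Sigma_\cpmR,\theta,A$ and the whole sequence $\langle(\Tt^\alpha,\Uu\rest\alpha+1)\rangle_{\alpha\leq\theta^+}$ in $X$, $|X|=\theta$, $\theta\subseteq X$, and $\eta:=X\cap\theta^+\in C$ (take $X$ to be the union of a continuous $\subseteq$-increasing $\theta$-chain of such submodels). Let $\pi:\bar H\cong X$ be the transitive collapse; then $\crit(\pi)=\eta$, $\pi(\eta)=\theta^+$, and $\pi$ fixes $\mouseM,\cpmR,\construction$ and each $(\Tt^\alpha,\Uu\rest\alpha+1)$ for $\alpha<\eta$. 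Since for $\alpha<\eta$ these pairs, the trees $\Tt^{<\lambda}$ (limit $\lambda<\eta$), and the branches picked in them all lie in $X$ and below $\eta$, the collapsed strategies $\pi^{-1}(\Sigma_\mouseM),\pi^{-1}(\Sigma_\cpmR)$ agree with $\Sigma_\mouseM,\Sigma_\cpmR$ on every length-$<\eta$ tree arising in the comparison; hence the comparison computed inside $\bar H$ coincides with the real one through all stages $<\eta$, and $\bar H$ regards $\eta$ as its ``$\theta^+$''. Running the classical length analysis at the top stage $\eta$ — the main branches $b^{\Tt^\eta}$ and $[0,\eta]_\Uu$ are clubs in $\eta$, for $\alpha$ in a further club the tail branch embeddings have critical point $\geq\delta_\alpha=\alpha$, so $M^\eta$ and $N^\eta_{\conlength^\eta}$ agree below $\eta$ with $\eta$ a common limit cardinal, and as in the classical case one of the two sides must have run out by height $\eta$ — $\bar H$ sees $(\dagger)^\beta$ for some $\beta\leq\eta$. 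If $\beta<\eta$, the same holds in $V$ by absoluteness of the comparison below $\eta$, contradicting the assumption. If $\beta=\eta$, then $\bar H$ sees one of the two sides run out at height $\eta$; pushing this back up under $\pi$ — here one uses branch uniqueness for the comparison trees at the relevant limit stage (equivalently, the $Q$-structure characterization of the branch, so that $\pi^{-1}$ of the stage-$\theta^+$ branch is the stage-$\eta$ branch of the real comparison) — gives that in $V$, at stage $\eta<\theta^+$, $M^\eta$ or $Q^\eta_\rho$ has height exactly $\eta$, so $(\dagger)^\eta$ holds, again a contradiction.

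I expect the reflection step to be the main obstacle, and within it two points. First, matching the strategy's choice of cofinal branch at the reflecting limit stage with the collapsed branch, which forces one to invoke and justify branch uniqueness (the $Q$-structure characterization) for the comparison trees. Second, the case in which $b^\Tt$ threatens to drop in model, which must be excluded by hand, essentially as in the classical comparison lemma: a segment dropped to along $b^\Tt$ would have either a largest cardinal or a height that is a limit of cardinals, and carrying it along the non-dropping tail of the branch would force the eventual $\mouseM$-side model to have a largest cardinal, contradicting that the $\lambda^\eta$ ($\eta\in C$) are cofinal in $\theta^+$ and are cardinals of that model. This subargument simultaneously settles the fact postponed in Remark \ref{rem:dagger_holds}, that $b^\Tt$ does not drop in model in the surviving alternative there. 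By contrast, the reduction of $(\dagger)^\eta$ to a running-out of one side is routine bookkeeping on top of Claim \ref{clm:agmt}.
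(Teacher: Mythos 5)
Your proposal departs from the paper's argument in a way that leaves the key step unproven. The paper's proof of termination is a direct hull argument: take $\pi:H\to V_\eta$ with $\kappa=\crit(\pi)>\theta$, observe that both $i^\Tt_{\kappa,\theta^+}$ and $i^\Uu_{\kappa,\theta^+}$ have critical point $\kappa$, pick $\alpha+1\in(\kappa,\theta^+]_\Tt$ and $\beta+1\in(\kappa,\theta^+]_\Uu$ with $\crit(E^\Tt_\alpha)=\crit(E^\Uu_\beta)=\kappa$, use Claim \ref{clm:agmt}(\ref{a:E^UU(iv)}) to get compatibility of $E^\Tt_\alpha$ and $E^\Uu_\beta$ through $\nu=\min(\nu(E^\Tt_\alpha),\nu(F^\Uu_\beta))$, and then in a sequence of subclaims show $\alpha>\beta$, that $F^\Uu_\beta$ sits on (or is derived from) the sequence of $N=M^\Tt_\alpha|\lh(E^\Tt_\alpha)$, that $F^\Uu_\beta\notin\es_+(M^\beta)$ (via $(A,B^\beta)$-validity and the extender algebra), and finally that $\Tt^\beta$ must already have used $F^\Uu_\beta$ — contradicting Claim \ref{clm:agmt}(\ref{a:Text(ii)}).

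Your first two paragraphs (constructing the club $C$, showing $\mu^\eta_0\geq\eta$ for $\eta\in C$, and noting that $(\dagger)^\eta$ follows whenever the common part actually exhausts one side) are sound consequences of Claim \ref{clm:agmt}, but they merely reformulate what needs to be produced; they do not produce it. The fatal gap is in the third paragraph: after the reflection to $\eta=X\cap\theta^+$ you assert ``as in the classical case one of the two sides must have run out by height $\eta$'' and hence ``$\bar H$ sees $(\dagger)^\beta$ for some $\beta\leq\eta$.'' This is precisely the statement of the claim you are trying to prove, and nothing in the surrounding setup (the models agreeing below $\eta$, $\eta$ being a common limit cardinal) entails it. Indeed, under the hypothesis for contradiction both $\Tt\rest\theta^+$ and $\Uu\rest\theta^+$ are cofinally non-padded, so neither side ``runs out'' at any $\eta<\theta^+$; the contradiction is obtained from compatibility of the extenders applied at the critical point, not from one side vanishing. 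Your invocation of ``classical length analysis'' is exactly the gap that needs filling; the multi-subclaim analysis in the paper (and in particular the use of Claim \ref{clm:agmt}(\ref{a:E^UU(iv)}), the validity Subclaim relying on extender algebra axioms, and the initial segment analysis via \cite[4.11, 4.12, 4.15]{thesis}) is that content. Also note that the drop-in-model issue you mention at the end is not part of this claim at all; the paper handles it separately in Claim \ref{clm:drop} via the dropdown/core analysis and Claim \ref{clm:agmt}(\ref{a:distinctions}), not via the termination argument.
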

\begin{proof}
Suppose not. Then we reach $\Tt=\Tt^{\theta^+}$ and $\Uu=\Uu\rest\theta^++1$.
Since $\mouseM,\cpmR$ have cardinality $<\theta^+$, both $\Tt\rest\theta^+$
and $\Uu\rest\theta^+$ are cofinally non-padded.
Let $\eta$ be some large ordinal and $\pi:H\to V_\eta$ elementary with $H$
transitive, $H$ of cardinality $\theta$,
$\crit(\pi)>\theta$, and $\Ttvec\rest(\theta^++1),\Uu$, etc, in $\rg(\pi)$.
Let $\kappa=\crit(\pi)$. As usual, $i^\Tt_{\kappa,\theta^+}$ and
$i^\Uu_{\kappa,\theta^+}$ both exist, have critical point $\kappa$, send
$\kappa$ to $\theta^+$, and agree over $M^\Tt_\kappa\inter M^\Uu_\kappa$. Let
$\alpha+1\in(\kappa,\theta^+]_\Tt$ be such that
$\crit(E^\Tt_\alpha)=\kappa$
and let $\beta+1\in(\kappa,\theta^+]_\Uu$ be such that
$\crit(E^\Uu_\beta)=\kappa$.
Since $\Tt$ is normal and since $\Uu$ does not move fine structural generators,
by Claim \ref{clm:agmt}(\ref{a:E^UU(iv)}), the extenders
$E^\Tt_\alpha$ and $E^\Uu_\beta$ are compatible over $\pow(\kappa)\inter
M^\Tt_\kappa\inter M^\Uu_\kappa$, through
$\nu=\min(\nu(E^\Tt_\alpha),\nu(F^\Uu_\beta))$, and
$\crit(i^\Uu_{\beta+1,\theta^+})\geq\nu(F^\Uu_\beta)$.

For all $\gamma\in[\kappa,\theta^+]$ we have $\pow(\kappa)\inter
M^\Tt_\kappa=\pow(\kappa)\inter M^\Tt_\gamma$ and $\pow(\kappa)\inter
M^\Uu_\kappa=\pow(\kappa)\inter M^\Uu_\gamma$. Also, letting $\kappa'\geq\kappa$
be least such that $E^\Tt_{\kappa'}\neq\emptyset$, we have
$M^\Tt_{\kappa'}=M^\Tt_\kappa$ and
$\lh(E^\Tt_{\kappa'})\geq(\kappa^+)^{M^\Tt_\kappa}$. So $\pow(\kappa)\inter
M^\Tt_\kappa\sub
M^\Tt_{\kappa'}||\lh(E^\Tt_{\kappa'})=Q^{\kappa'}||\theta^{\kappa'
}\in M^\Uu_{\kappa'}$, so $\pow(\kappa)\inter M^\Tt_\kappa\sub
M^\Uu_\kappa$.

Let $Q=M(\Tt\rest\theta^+)$. So $Q,M^\Tt_\kappa,M^\Tt_\alpha$ all
compute the same value for $\kappa^+$ and agree strictly below that point. Also
$E^\Tt_\alpha\notin Q$. By Claim
\ref{clm:agmt}(\ref{a:Ubelow_crit}) we have
that $Q^{\beta+1}_\rho=i^\Uu_{\kappa,\beta+1}(Q^\kappa_\rho)$, where
$\rho=\max(\{0\}\un(\sigmavec^\kappa\inter\kappa))$. Also,
$Q=i^\Uu_{\kappa,\theta^+}(Q^\kappa_\rho|\kappa)$, which, again by Claim
\ref{clm:agmt}(\ref{a:Ubelow_crit}), implies that
$\rho=\max(\{0\}\un(\sigmavec^{\beta+1}\inter\crit(i^\Uu_{\beta+1,\theta^+}))$.
Since
$\crit(i^\Uu_{\beta+1,\theta^+})\geq\nu(F^\Uu_\beta)$, we have
$Q|\nu(F^\Uu_\beta)=Q^{\beta+1}_\rho|\nu(F^\Uu_\beta)$ and
$Q||\nu(F^\Uu_\beta)=Q^\beta|\nu(F^\Uu_\beta)$. In particular,
$Q||(\kappa^+)^{Q^\beta}=Q^\beta|(\kappa^+)^{Q^\beta}$. However, we might have
$(\kappa^+)^{Q^\beta}<(\kappa^+)^Q$.

Since $F^\Uu_\beta\rest\nu(F^\Uu_\beta)\sub E^\Uu_\beta$, the compatibility of
$E^\Tt_\alpha$ with $E^\Uu_\beta$ implies that
if $\nu(E^\Tt_\alpha)\leq\nu(F^\Uu_\beta)$ then
$E^\Tt_\alpha\rest\nu(E^\Tt_\alpha)\sub F^\Uu_\beta$, and
if $\nu(F^\Uu_\beta)\leq\nu(E^\Tt_\alpha)$ then
$F^\Uu_\beta\rest\nu(F^\Uu_\beta)\sub E^\Tt_\alpha$.
But maybe $\nu(F^\Uu_\beta)<(\kappa^+)^Q$, in which case
$F^\Uu_\beta$ is not total over $Q$.

\begin{sclm} $\nu(E^\Tt_\alpha)\geq\nu(F^\Uu_\beta)$ and
$\alpha>\beta$.\end{sclm}
\begin{proof}
Suppose $\nu(E^\Tt_\alpha)<\nu(F^\Uu_\beta)$. Then
$(\kappa^+)^{M^\Tt_\alpha}\leq\nu(E^\Tt_\alpha)<\nu(F^\Uu_\beta)$ and
$E^\Tt_\alpha\rest\nu(E^\Tt_\alpha)$ is a proper, non-type Z initial segment of
$F^\Uu_\beta$. So $E^\Tt_\alpha\in i^\Uu_{\kappa,\beta+1}(Q^\kappa_\rho)$, by
Claim \ref{clm:agmt}(\ref{a:E^UU(vii)}). But
$\crit(i^\Uu_{\beta+1,\theta^+})\geq\nu(F^\Uu_\beta)$, so
$\crit(i^\Uu_{\beta+1,\theta^+})>\lh(E^\Tt_\alpha)$, so $E^\Tt_\alpha\in Q$,
contradiction.

Now $\alpha\neq\beta$ by construction. If $\alpha<\beta$ then by Claim
\ref{clm:agmt}(\ref{a:Text(ii)}),
$\nu(E^\Tt_\alpha)<\lh(E^\Tt_\alpha)\leq\nu(F^\Uu_\beta)$, a contradiction,
which proves the subclaim.
\end{proof}

So $\nu=\nu(F^\Uu_\beta)$. Let
$N=M^\Tt_\alpha|\lh(E^\Tt_\alpha)=M^\alpha|\lh(E^\Tt_\alpha)$.

\begin{sclm}\label{sclm:isc} Either (a) $F^\Uu_\beta\in\es_+^N$ or else (b)
$F^\Uu_\beta$ is either type 1 or type 3, $N|\nu$ is active and
$F^\Uu_\beta\in\es^{\Ult(N||\nu,\es^N_\nu)}$.
\end{sclm}
\begin{proof}
We know that $F^\Uu_\beta\rest\nu\sub E^\Tt_\alpha$. So if $(\kappa^+)^Q\leq\nu$
then the desired conclusion follows the initial segment condition.

Suppose $\nu<(\kappa^+)^Q$. Then $F^\Uu_\beta$ is type 1. For otherwise,
$\gamma=(\kappa^+)^{Q^\beta}<\nu$, so $\gamma$ is a cardinal of
$Q^{\beta+1}_\rho$, contradicting that
$(\kappa^+)^{Q^{\beta+1}_\rho}=(\kappa^+)^Q$. So $F^\Uu_\beta$ is a partial
normal measure derived from $E^\Tt_\alpha$, inducing the type 1 premouse
$R=Q^\beta$ such that $\nu=(\kappa^+)^R<(\kappa^+)^N$ and $R|\nu=N||\nu$. We now
use \cite[4.11, 4.12, 4.15]{thesis} to yield the conclusion of the
subclaim.\footnote{It seems one might try to deduce
\cite[4.11, 4.12, 4.15]{thesis} from the $n=0$ condensation given in
\cite[pp.87,88]{fsit}. That is, let $E=\es^N_\gamma$ be the type 1 initial
segment of $E^\Tt_\alpha$. Using a restriction of the factor map
$j:\Ult_0(Q|\nu,F^Q)\to\Ult_0(N|(\kappa^+)^N,E)$, we get a $\Sigma_0$-elementary
$\pi:Q\to N|\gamma$, with $\crit(\pi)=\nu$ and $\pi(\nu)=(\kappa^+)^N$.
Moreover, $\rho_1^Q\leq\nu$. However, $\pi$ need not be $\Sigma_1$-elementary,
even for formulas without parameters, so $\pi$ might not even 
be a weak $0$-embedding (for instance, if $F=F^Q$ is the least partial measure
derived from $E$ such that $F$ is on $\es^N$). So the $n=0$ condensation of
\cite[pp.87,88]{fsit} does not apply.} Since $\mouseM$ is typical, these
apply to $N$. However,
if $N|\nu$ is active with a type 3 extender, then we must verify that
\cite[4.15]{thesis} applies; that is, we must verify that
$R||\OR^R=\Ult(N||\nu,\es^N_\nu)||\OR^R$.
Well, $\Tt^\beta$ and $\Tt^\alpha$ use the same extenders $E$ such that
$\lh(E)<\nu$. However, $N|\nu$ is active while $M^\beta|\nu$ is not,
so $\Tt^\beta$ uses $\es^N_\nu$. Moreover, $\nu$ is the largest cardinal of $R$,
and $R||\OR^R=M^\beta||\OR^R$. Therefore $\Tt^\beta$ uses no
extenders $E$ such that $\nu<\lh(E)<\OR^R$ and
$M^\beta||\OR^R=\Ult(N||\nu,\es^N_\nu)||\OR^R$. So
\cite[4.15]{thesis} applies.

This completes the proof of the subclaim.\end{proof}

\begin{sclm}
$F^\Uu_\beta\notin\es_+(M^\beta)$.
\end{sclm}
\begin{proof}
 Suppose $F^\Uu_\beta\in\es_+(M^\beta)$. Then 
$P^\beta|\theta^\beta=M^\beta|\lh(F^\Uu_\beta)=Q^\beta|\theta^\beta$ is active,
but
$(\dagger)^\beta$
fails, so by \ref{rem:background_exists},
$M^\beta|\lh(F^\Uu_\beta)$ is not $(A,B^\beta)$-valid. But $A$ is
bounded in $\kappa$. So $F^\Uu_\beta$ induces an extender algebra axiom which is
not satisfied by $(A,B^\beta)$, which gives a contradiction as usual,
proving the subclaim.
\end{proof}

\begin{sclm}\label{sclm:F^Uu_beta_used}$\Tt^\beta$ uses $F^\Uu_\beta$.\end{sclm}
\begin{proof}
Suppose Subclaim \ref{sclm:isc}(a) holds. Then $\Tt^\alpha$ and $\Tt^\beta$ use
the same extenders $E$ such that $\lh(E)<\lh(F^\Uu_\beta)$.
Since $F^\Uu_\beta\in\es_+^N$ but
$F^\Uu_\beta\notin\es_+(M^\beta)$,
$\Tt^\beta$ uses $F^\Uu_\beta$.

If Subclaim \ref{sclm:isc}(b) holds it is similar but there are
$\delta_0<\delta_1<\beta$ such that $E^{\Tt^\beta}_{\delta_0}=\es^N_\nu$,
$E^{\Tt^\beta}_{\delta_1}=F^\Uu_\beta$, and $E^{\Tt^\beta}_\delta=\emptyset$ for
all $\delta\in(\delta_0,\delta_1)$.

This completes the proof of the subclaim.
\end{proof}

But Subclaim \ref{sclm:F^Uu_beta_used} contradicts Claim
\ref{clm:agmt}(\ref{a:Text(ii)}) at stage $\beta+1$, proving
Claim \ref{clm:termination}.
\end{proof}

By Claim \ref{clm:termination} we have $\zeta<\theta^+$ such that
$(\dagger)^\zeta$ holds. Let $\Tt=\Tt^\zeta$ and $\Uu=\Uu\rest\zeta+1$.

\begin{clm}\label{clm:drop} Either $b^\Tt$ does not drop in model or
$i^\Uu(N_\conlength)\ins
\I^\Tt$.\end{clm}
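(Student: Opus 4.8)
The plan is to pick up exactly where Remark~\ref{rem:dagger_holds} stopped. Granting $(\dagger)^\zeta$, the discussion there leaves open only one case: $N_{\xi_k}=M^\zeta=\I^\Tt$ with $\xi_k<\xi_0=\conlength^\zeta$ (in every other case either $M^\zeta=N_\xi$ for some $\xi$, so $b^\Tt$ does not drop, or $i^\Uu(N_\conlength)=N_{\xi_k}\ins\I^\Tt$, or $\xi_k=\xi_0$ and $i^\Uu(N_\conlength)=\I^\Tt$). So I assume $N_{\xi_k}=M^\zeta=\I^\Tt$ and $\xi_k<\conlength^\zeta$. Then $i^\Uu(N_\conlength)=N^\zeta_{\conlength^\zeta}$ is a level of $\CC^\zeta$ lying strictly past $N_{\xi_k}$, hence not an initial segment of $\I^\Tt$, so the second disjunct of the claim fails and I must show $b^\Tt$ does not drop in model. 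First I record what $\xi_k<\xi_0$ buys: the $\xi$--coordinate of the descent drops somewhere, so $\sigmavec^\zeta\neq\emptyset$; setting $\sigma=\max\sigmavec^\zeta$ we get $\xi_k=\xi^\zeta_\sigma$ and $\core_\om(M^\zeta)=\core_0(q^\zeta_\sigma)$, where $q^\zeta_\sigma\pins N^\zeta_{\xi_j}$ for the appropriate $\xi_j>\xi_k$ and $\rho_\om(q^\zeta_\sigma)=\sigma<\OR(q^\zeta_\sigma)$. In particular $\rho_\om(M^\zeta)=\sigma<\OR(M^\zeta)$, and (after collapsing) the $\om$--core of $M^\zeta$ is a proper segment of the strictly larger construction level $N^\zeta_{\xi_j}$.

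Now suppose toward a contradiction that $b^\Tt$ drops in model, and let $\gamma+1\in b^\Tt$ be the \emph{largest} stage at which this happens; put $\bar\gamma=\Tt\pred(\gamma+1)$ and $\kappa=\crit(E^\Tt_\gamma)$. Then $M^*_{\gamma+1}\pins M^\Tt_{\bar\gamma}$ is $\om$--sound, and $i:=i^\Tt_{\gamma+1,\zeta}\colon M^*_{\gamma+1}\to\I^\Tt=M^\zeta$ is total; since $\Tt$ is normal, the critical points along $b^\Tt$ above $\gamma+1$ are $\geq\nu(E^\Tt_\gamma)>\kappa$, so $\crit(i)=\kappa$. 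Set $\kappa^*=i(\kappa)$, a cardinal of $M^\zeta$ with $\kappa<\kappa^*<\OR(M^\zeta)$. Using Claim~\ref{clm:agmt}(\ref{a:Text(iv)}) with $\alpha=\gamma,\beta=\zeta$ (here $P^\zeta_*=M^\zeta$, as $(\dagger)^\zeta$ holds) together with the agreement conditions (\ref{a:general(iii)}),(\ref{a:general(ii)}), the model $M^\Tt_{\bar\gamma}$ agrees with $M^\zeta$ as a premouse past $\lh(E^\Tt_\gamma)$; since $\OR(M^*_{\gamma+1})$ does not exceed that point, I get $M^*_{\gamma+1}\pins M^\zeta$, and also $E^\Tt_\gamma\in\es_+(M^\zeta)$, indexed below $\OR(M^\zeta)$.

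The heart of the argument will be to contradict the combination of: (i) $M^\zeta$ is a level of $\CC^\zeta$ with $\rho_\om(M^\zeta)=\sigma$ and $\core_\om(M^\zeta)=\core_0(q^\zeta_\sigma)$, the latter a proper segment of the larger level $N^\zeta_{\xi_j}$; and (ii) $M^\zeta=i(M^*_{\gamma+1})$ with $M^*_{\gamma+1}\pins M^\zeta$ an $\om$--sound proper segment and $\crit(i)=\kappa$. From (ii), $\om$--soundness of $M^*_{\gamma+1}$ propagates under $i$ to give $\core_\om(M^\zeta)=M^*_{\gamma+1}$ and $\rho_\om(M^\zeta)\leq\kappa^*$ (if degree drops occur along $(\gamma+1,\zeta]_\Tt$, one replaces $\rho_\om,\core_\om$ throughout by $\rho_{d+1},\core_{d+1}$ with $d$ the final degree, and the argument is unchanged). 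Comparing with (i), $\core_0(q^\zeta_\sigma)=M^*_{\gamma+1}\pins M^\zeta$ and $\sigma\leq\kappa^*$. I would close this by the standard Dodd--Jensen / realizability route: levels of $\CC^\zeta$ realize into $M^\Uu_\zeta$ via the background extenders $E^*_\xi$ and the factor maps of Remark~\ref{rem:factor}, giving $\pi\colon M^\zeta\to M^\Uu_\zeta$; then $\pi\circ i\colon M^*_{\gamma+1}\to M^\Uu_\zeta$ realizes the dropped-to model $M^*_{\gamma+1}$ --- a proper segment of the $\Tt$-model $M^\Tt_{\bar\gamma}$ --- back into $M^\Uu_\zeta$. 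Applying the Dodd--Jensen lemma for $\Sigma_\mouseM$ to the comparison of $\mouseM$ against $\core_\om(M^\zeta)=M^*_{\gamma+1}$, the resulting minimality of iteration maps is incompatible with $b^\Tt$ dropping in model while nonetheless delivering the construction level $M^\zeta$. So $b^\Tt$ does not drop in model, which proves the claim.

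The step I expect to be the main obstacle is exactly this last one: turning the structural coincidence --- that $\core_\om(\I^\Tt)$ is at once a proper segment of $\I^\Tt=N^\zeta_{\xi_k}$, a proper segment of the larger level $N^\zeta_{\xi_j}$, and a model realizing into $M^\Uu_\zeta$ to which a $\mouseM$--iterate was dropped --- into an outright contradiction. Executing it cleanly means matching the realizations of $\CC^\zeta$--levels with the comparison maps of $\Tt$ and $\Uu$ so that Dodd--Jensen applies, and also disposing of three technicalities: degree drops along $(\gamma+1,\zeta]_\Tt$; the boundary case $\gamma+1=\zeta$, where $\gamma$ carries the last extender of $\Tt$; and the possibility that $N^\zeta_{\xi_j}$ itself fails to be $\om$--sound, so that the identification $\xi_k=\xi^\zeta_\sigma$ has to be tracked through the condensation properties of $\CC$ invoked in Definition~\ref{dfn:descent}. \qed
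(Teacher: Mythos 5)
Your reduction to the relevant case via Remark~\ref{rem:dagger_holds} is exactly right, and you correctly extract the local facts that matter: $\I^\Tt=M^\zeta=Q^\zeta$ with $\xi^\zeta<\xi^\zeta_0$, hence $\sigma=\max\sigmavec^\zeta$ exists and $\core_\om(M^\zeta)=\core_0(q^\zeta_\sigma)$, and on the $\Tt$ side a model drop gives $\core_\om(\I^\Tt)=M^*_{\gamma+1}$ for some drop point $\gamma+1\in b^\Tt$. But you stop short of the punchline and instead gesture at a Dodd--Jensen / realizability argument which you yourself flag as the ``main obstacle,'' and which you do not carry out. That step is genuinely problematic here: the Dodd--Jensen lemma governs comparisons of two fine-structural iterable premice, whereas $\Uu$ is coarse; the realization $\pi\colon M^\zeta\to M^\Uu_\zeta$ via background extenders does not by itself produce the kind of copy/realizability scaffolding under which $\Sigma_\mouseM$'s minimality applies. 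As written, the argument is a plan, not a proof, and the missing piece is precisely the heart of the claim.

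The paper's proof closes the gap without any appeal to Dodd--Jensen, by tying the dropped-to model to the descent data on the $\Tt$ side. The unstated ingredient in your writeup is the subclaim: if $\alpha+1$ is a drop in $\Tt^\beta$ with $\eps=\Tt^\beta\pred(\alpha+1)$ and $\kappa=\crit(E^{\Tt^\beta}_\alpha)\geq\min(\rhovec^\eps)$, then $M^{*\Tt^\beta}_{\alpha+1}=P^\eps_\rho$ where $\rho=\max(\rhovec^\eps\inter(\kappa+1))$; this is read off Remark~\ref{rem:E^Tt_eps_neq_emptyset}, which identifies the $\gamma^\eps_{i+1}$'s as a dropdown sequence. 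With this, picking $\eps$ so that $\core_\om(\I^\Tt)^\unsq\pins M^\Tt_\eps$ gives $\core_\om(\I^\Tt)=\core_0(P^\eps_\rho)$ for $\rho=\rho_\om(\I^\Tt)\in\rhovec^\eps$. Combined with your observation $\core_\om(\I^\Tt)=\core_0(q^\zeta_\sigma)$, and since both $P^\eps_\rho$ and $q^\zeta_\sigma$ are $\om$-sound, this forces $P^\eps_\rho=q^\zeta_\sigma$, directly contradicting Claim~\ref{clm:agmt}(\ref{a:distinctions}). That item is the load-bearing fact --- the paper flags it as ``probably the most central'' --- and a proof of Claim~\ref{clm:drop} that bypasses it is unlikely to succeed. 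So the gap is concrete: you need the subclaim identifying $M^*_{\gamma+1}$ with $P^\eps_\rho$, and then the contradiction is with (\ref{a:distinctions}), not with any minimality-of-iteration-maps principle.
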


\begin{proof}
We first relate cores of models on $\Tt$ to the structures arising in the
comparison.
\begin{sclm*}\label{sclm:cores} Let $\alpha+1<\lh(\Tt^\beta)$ and let
$\eps=\Tt^\beta\pred(\alpha+1)$. Let $\kappa=\crit(E^{\Tt^\beta}_\alpha)$. If
$\kappa<\min(\rhovec^\eps)$ then $\Tt^\beta$ does not drop in model at
$\alpha+1$ \textup{(}here $\min(\emptyset)=\infty$\textup{)}. If
$\min(\rhovec^\eps)\leq\kappa$ then $M_{\alpha+1}^{*\Tt^\beta}=P^\eps_\rho$
where $\rho=\max(\rhovec^\eps\inter(\kappa+1))$.\end{sclm*}
\begin{proof} This follows \ref{rem:E^Tt_eps_neq_emptyset}.
\end{proof}

Now suppose the claim fails. So $b^\Tt$ drops in model, and by
\ref{rem:dagger_holds}, we may assume that
$\I^\Tt=M^\zeta=Q^\zeta$ and $\xi^\zeta<\xi^\zeta_0$.
Let $\eps<\lh(\Tt)$ be such that $\core_\om(\I^\Tt)^{\unsq}\pins M^\Tt_\eps$. Let
$\rho=\rho_\om(\I^\Tt)$.
By the Subclaim, $\rho\in\rhovec^\eps$ and
$\core_\om(\I^\Tt)=\core_0(P^\eps_\rho)$. We have
$\I^\Tt=Q^\zeta=Q^\zeta_{\sigma}$ for some $\sigma\in\sigmavec^\zeta$ (since
$\xi^\zeta<\xi^\zeta_0$). So
$\core_0(q^\zeta_{\sigma})=\core_\om(Q^\zeta_{\sigma})=\core_0(P^\eps_\rho)$.
Therefore $q^\zeta_{\sigma}=P^\eps_\rho$, contradicting Claim
\ref{clm:agmt}(\ref{a:distinctions}), and completing the proof of Claim
\ref{clm:drop}.
\end{proof}

We have shown that $\Tt,\Uu$ satisfy conditions
\ref{i:proof_main1_a}-\ref{i:proof_main1_c}. We now refine this to complete the
proof of \ref{thm:main1}:

\begin{clm}\label{clm:core} There is $\eps\leq\zeta$ such that
$(\Tt\rest\eps+1,\Uu\rest\eps+1)$ satisfies the requirements of
\ref{thm:main1}.\end{clm}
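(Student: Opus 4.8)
We take the trees required by \ref{thm:main1} to be tail-cuts $\Tt\rest\eps+1$, $\Uu\rest\eps+1$ for a suitable $\eps\leq\zeta$. By \ref{rem:dagger_holds} together with Claim \ref{clm:drop}, since $(\dagger)^\zeta$ holds, either $i^\Uu(N_\conlength)\ins\I^\Tt$, or else $b^\Tt$ does not drop in model and $\I^\Tt=N^\zeta_\xi$ for some $\xi<\conlength^\zeta$; moreover, if $i^\Uu(N_\conlength)\pins\I^\Tt$ then $i^\Uu(N_\conlength)$, being a proper initial segment of the iterable premouse $\I^\Tt$, is $\om$-sound, and likewise $\I^\Tt$ is $\om$-sound whenever $b^\Tt$ drops in model; finally, if $b^\Tt$ does not drop in model then $\I^\Tt$ is $\deg^\Tt(\infty)$-sound. (Degrees of soundness, cores, and the commutation of cores with $i^\Uu$ are computed correctly by the coarse iterates, these being definable notions.) Write $k=\min(m,n)$.

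I claim $\eps=\zeta$ works, unless $b^\Tt$ does not drop in model but drops in degree to a value below what the matching clause of \ref{thm:main1}(b) needs. Indeed, for $\eps=\zeta$: if $i^\Uu(N_\conlength)\pins\I^\Tt$, then $\core_n(N_\conlength)=N_\conlength$, so \ref{thm:main1}(a)(i) holds and we are in clause (b)(i) with $P=i^\Uu(\core_n(N_\conlength))=i^\Uu(N_\conlength)$, $\rho=\rho_n^P$; if $\I^\Tt=i^\Uu(N_\conlength)$ and $b^\Tt$ drops in model, again $\core_n(N_\conlength)=N_\conlength$ and we are in (b)(i), second disjunct, $P=\I^\Tt$; if $\I^\Tt=i^\Uu(N_\conlength)$, $b^\Tt$ does not drop in model, and $\deg^\Tt(\infty)\geq k$, then $N_\conlength$ is $k$-sound, $\I^\Tt=i^\Uu(\core_k(N_\conlength))$, clause (b)(iii), $P=\I^\Tt$, $\rho=\rho_k^P$; and if $\I^\Tt=N^\zeta_\xi$ with $\xi<\conlength^\zeta$ and $b^\Tt$ does not drop in degree, then $\I^\Tt$ is $m$-sound, $\I^\Tt=\core_m(N^\zeta_\xi)$, so \ref{thm:main1}(a)(ii) and clause (b)(ii) hold, $P=\I^\Tt$, $\rho=\rho_m^P$. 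In each of these, \ref{thm:main1}(b) follows from \ref{i:proof_main1_b} (which gives that all of $P$ is $(A,B^\zeta)$-valid, so in particular $P|\tau$ is), and \ref{thm:main1}(c) from \ref{i:proof_main1_c}, with no change to the trees.

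There remains the case: $b^\Tt$ does not drop in model but drops in degree, and either $\I^\Tt=N^\zeta_\xi$ with $\xi<\conlength^\zeta$ (set $d=m$; here \ref{thm:main1}(a)(ii) demands an $m$-sound last model) or $\I^\Tt=i^\Uu(N_\conlength)$ with $\deg^\Tt(\infty)<k$ (set $d=k$). Let $\alpha+1\in b^\Tt$ be least along $b^\Tt$ with $\deg^\Tt(\alpha+1)<d$, and put $\eps=\Tt\pred(\alpha+1)$, so $\eps\in b^\Tt$ and $[0,\eps]_\Tt$ has no model drop and no degree drop below $d$. Then $M:=M^\Tt_\eps$ is $d$-sound (being formed by ultrapowers and direct limits, without model drop and keeping degree $\geq d$, from the $m$-sound $\mouseM$, with $d\leq m$), and $\crit(i^\Tt_{\eps,\zeta})\geq\rho_d^M$ (a degree drop below $d$ has critical point $\geq\rho_d^M$). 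The trees $\Tt\rest\eps+1$, $\Uu\rest\eps+1$ are via $\Sigma_\mouseM$, $\Sigma_\cpmR$, of length $\leq\zeta+1<\upsilon^+$, and form a neat pair with neat code $B^\eps=B^\zeta\inter(\lambda^\eps+1)^3$, by Claim \ref{clm:agmt}(\ref{a:<limneat}),(\ref{a:general(iii)}) (since $\Tt\rest\eps+1$ and $\Tt^\eps$ agree on extenders of length $<\lambda^\eps$, which is where the neat code lives). For \ref{thm:main1}(a) one shows $M=\core_m(N^\eps_\gamma)$ for some $\gamma\leq i^\Uu_{0,\eps}(\conlength)$ when $d=m$ (giving clauses (a)(ii),(b)(ii)), and handles the $d=k$ case analogously via clauses (a)(i) or (b)(iii). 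The key point is that $M$, being $d$-sound and embedding into $\I^\Tt$ with critical point $\geq\rho_d^M$, is exactly the $d$-core that the comparison resurrects at stage $\eps$; one establishes this by pulling the identity $\I^\Tt=N^\zeta_\xi$ (resp.\ $\I^\Tt=i^\Uu(N_\conlength)$) back to stage $\eps$ using the elementarity of $i^\Tt_{\eps,\zeta}$ and $i^\Uu_{\eps,\zeta}$, the descent bookkeeping of Claim \ref{clm:agmt} --- in particular (\ref{a:distinctions}), that $P^\eps_\rho\neq q^\eps_\sigma$ always --- and the core-analysis subclaim in the proof of Claim \ref{clm:drop}. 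For \ref{thm:main1}(b), with $\rho=\rho_d^M$ and $\tau=(\rho^+)^M$: $\crit(i^\Tt_{\eps,\zeta})\geq\rho$ gives $M|\tau\ins\I^\Tt$, which is $(A,B^\zeta)$-valid by \ref{i:proof_main1_b}, so $M|\tau$ is valid against $B^\zeta$ restricted to its ordinals; the identity just used yields $\tau\leq\lambda^\eps+1$, so that restriction coincides with $B^\eps$. Finally \ref{thm:main1}(c) is immediate from \ref{i:proof_main1_c} by restriction.

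The main obstacle is this last identification: verifying that tail-cutting $\Tt$ at the final branch model before the first fatal degree drop produces a last model that is literally $\core_d$ of a level of $\CC^\eps$ (equivalently, that no further extender of $\Tt$ is needed to reach that core). This requires reconciling the fine-structural behaviour of a degree-dropping $m$-maximal branch --- which presents, at the model just before the drop, precisely the relevant core of its image --- with the resurrection/descent bookkeeping recorded in Claim \ref{clm:agmt}; a secondary nuisance is the bound $\tau\leq\lambda^\eps+1$ needed to carry $(A,B)$-validity from stage $\zeta$ down to stage $\eps$.\qed
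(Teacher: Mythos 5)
Your handling of the cases where $\eps=\zeta$ suffices (proper initial segment, model drop with $\I^\Tt=i^\Uu(N_\conlength)$, no drop at all, degree $\geq k$) is correct and matches the paper in substance. The real content of the claim is the tail-cut case, and there your proposal remains an outline: you acknowledge the ``main obstacle'' --- showing that $M^\Tt_\eps$ is literally $\core_d$ of a level of $\CC^\eps$ and that the validity bound transfers --- but you do not resolve it, and this is precisely where the paper does its work.

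Two concrete gaps. First, your $\eps=\Tt\pred(\alpha+1)$ is chosen using only the fine tree $\Tt$, whereas the paper's $\eps=\max(\gamma,\beta)$ also incorporates a stage $\beta\in b^\Uu$ chosen so that $\crit(i^\Uu_{\beta,\zeta})>\rho$ (or $i^\Uu_{\beta,\zeta}=\id$). That condition is essential: it is what lets one pull the identity $\I^\Tt=N^\zeta_\xi$ back along $i^\Uu_{\eps,\zeta}$ to obtain $M^\Tt_\eps=\core_m(N^\eps_{\alpha'})$ for some $\alpha'\leq\conlength^\eps$. Nothing in your choice of $\eps$ guarantees $\crit(i^\Uu_{\eps,\zeta})>\rho$, and you need an argument (the paper's Subclaim, $\eps=\min(\beta',\gamma')$) that the fine-side position $\gamma'$ and the coarse-side position $\beta'$ are synchronized. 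Ruling out $\gamma\leq\gamma'<\beta$ is the delicate direction: the paper does this by a fine-structural analysis of $j=i^\Uu_{\sigma,\xi+1}$ applied to $M^\Tt_\gamma|\kappa$ and the level $N^\sigma_{\eta'}$, using Claim \ref{clm:agmt}. Without this, one cannot assert the pull-back. Second, for the validity requirement you invoke ``$\tau\leq\lambda^\eps+1$'' as the bound needed to identify $B^\eps$ with $B^\zeta$ restricted to $\tau^3$, but you supply no argument for it; the paper instead derives $\lh(E)\geq\tau$ from the no-model-drop hypothesis and $\crit(i^\Tt_{\gamma,\zeta})\geq\rho$, treats the boundary case $\lh(E)=\tau$ via the observation that type 2 extenders are irrelevant to validity, and then converts this to $B^\eps=B^{\gamma'}\inter(\rho+1)^3$ using the Subclaim. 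These are the steps your proof defers; they are not routine and cannot be elided.
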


\begin{proof}
If $i^\Uu(N_\conlength)\pins \I^\Tt$ then
$N^\zeta_\conlength$ is
$\om$-sound, and we just
use $\eps=\zeta$. So assume that $\I^\Tt=N_\alpha^\zeta$ for some
$\alpha\leq\conlength^\zeta$. Let $b=b^\Tt$. If $b$ does not drop in model or
degree, again we use $\eps=\zeta$.
So assume that $b$ drops in model or degree. We have two cases to deal with:
(i) either $b$ drops in model or [$\alpha=\conlength^\zeta$ and $m>n$];
(ii) otherwise.

We assume we are in case (ii), but the proof in case (i) is
almost the same. So $b$ drops in degree but not in model, and
$(\alpha,m)\leq_\lex(\conlength^\zeta,n)$.
Now
$\core_m(N_\alpha^\zeta)=\core_m(\I^\Tt)=M^\Tt_\gamma$ for some
$\gamma<\lh(\Tt)$. Let $\gamma$ be least such and $\gamma'$
greatest such
(so $\gamma'\geq\gamma$ is least such that
$E=E^\Tt_{\gamma'}\neq\emptyset$). Let $\rho=\rho_m^{\I^\Tt}$ and let
$\tau=(\rho^+)^{M^\Tt_\gamma}$. Let
$\beta\in c=b^\Uu$ be least such that either
$i^\Uu_{\beta,\zeta}=\id$ or $\crit(i^\Uu_{\beta,\zeta})>\rho$. Let
$\beta'$ be largest such that $M^\Uu_{\beta'}=M^\Uu_\beta$. Let
$\eps=\max(\gamma,\beta)$.
We will show that this works.

Since $b$ does not drop in model and $\crit(i^\Tt_{\gamma,\zeta})\geq\rho$,
we have $\lh(E)\geq\tau$, and if
$\lh(E)=\tau$ then
$E$ is type 2. Since type 2 extenders are not relevant to
validity, therefore $M^\Tt_\gamma|\tau$ is
$(A,B^{\gamma'})$-valid.

By choice of $\beta$ and elementarity,
$M^\Tt_\gamma=\core_m(N^\beta_{\alpha'})$ for some
$\alpha'\leq\conlength^\beta$.

\begin{sclm*} $\eps=\min(\beta',\gamma')$.
\end{sclm*}
\begin{proof}
Since $\beta\leq\beta'$ and $\gamma\leq\gamma'$, we just have to rule out
the possibility that either $\beta\leq\beta'<\gamma$ or
$\gamma\leq\gamma'<\beta$.

Suppose $\beta\leq\beta'<\gamma$. In particular, $\gamma\neq 0$ and
$\lh(\Uu)>\beta'+1$, so $E^\Uu_{\beta'}\neq\emptyset$. Now
$\rho<\crit(i^\Uu_{\beta,\zeta})\leq\lambda^{\beta'}$ by Claim
\ref{clm:agmt}(\ref{a:E^UU(iii)}),
but because $\beta'<\gamma$, Claim \ref{clm:agmt}(\ref{a:general(ii)}) gives
that $\lambda^{\beta'}<\rho_m(M^\Tt_\gamma)$, contradiction.

Now suppose $\gamma\leq\gamma'<\beta$. Let $\xi\geq\gamma'$ be least such
that $E^\Uu_\xi\neq\emptyset$ and $\xi+1\in c$. Then $\xi>\gamma'$ since
$E^\Tt_{\gamma'}\neq\emptyset$. Since
$\tau\leq\lh(E)$, therefore by Claim
\ref{clm:agmt},
$\tau\leq\nu(F^\Uu_\xi)\leq\crit(i^\Uu_{\xi+1,\zeta})$. So
$\beta=\xi+1$. Let $\sigma=\Uu\pred(\xi+1)$, let $j=i^\Uu_{\sigma,\xi+1}$ and
let $\kappa=\crit(j)$. Let $\eta$ be such that
$N_\eta^\sigma=M^\Tt_\gamma|\kappa$. Let $\eta'>\eta$ be
least such that either $\eta'=\infty$ or for some $k$,
$(\eta',k)<_\lex(\conlength^\sigma,n)$ and
$\rho_{k+1}(N_{\eta'}^{\sigma})<\kappa$.
So $j(\eta')$ is defined similarly in
$M^\Uu_{\xi+1}$. Now $M^\Tt_\gamma||\tau=
N_{\alpha'}^{\xi+1}||\tau$ and
$\rho_m(N_{\alpha'}^{\xi+1})=\rho$.
By Claim \ref{clm:agmt}, and since $E$ was never retracted
after stage $\gamma'$,
$j(M^\Tt_\gamma|\kappa)|\lh(E)=M^\Tt_\gamma||\lh(E)$, and $\rho$ is a cardinal
in $j(M^\Tt_\gamma|\kappa)$.

Suppose
$\tau=(\rho^+)^{j(M^\Tt_\gamma|\kappa)}$.
Then $N_{\alpha'}^{\xi+1}$ witnesses that
$j(\eta')\neq\infty$, so $\eta'\neq\infty$. But
$j(\eta')<\alpha'$, because $\rho\notin\rg(j)$. Moreover,
$\rho_\om(N^\sigma_{\eta'})<\kappa$. But
$N_{\alpha'}^{\xi+1}|\kappa=j(N_{\eta'}^{\sigma})|\kappa$,
which leads
to contradiction.

So $\tau<(\rho^+)^{j(M^\Tt_\gamma|\kappa)}$.
But then the properties of $N_{\alpha'}^{\xi+1}$, and that $\rho$ is a cardinal
of $j(M^\Tt_\gamma|\kappa)$, give that $M^\Tt_\gamma\pins
j(M^\Tt_\gamma|\kappa)$, contradicting the fact that
$M^\Tt_\gamma||\lh(E)\ins j(M^\Tt_\gamma|\kappa)$.
This proves the subclaim.
\end{proof}

Now by the
subclaim, $B^\eps=B^{\gamma'}\inter(\rho+1)^3$,
so $M^\Tt_\gamma|\tau$
is $(A,B^\eps)$-valid, and the claim, and properties
\ref{thm:main1}(a),(b), follow.
\end{proof}

This completes the proof of the theorem.\qed
\end{proof}

We finish with one corollary to the foregoing proof, which answers
a question of Nam Trang and Martin Zeman. For simplicity we assume that
$m=n=0$.

\begin{cor}
 Let $\mouseM$, etc, be as in the statement of \ref{thm:main1}, and assume
$m=n=0$. Let
$\Tt,\Uu$ be constructed as in its proof. Let $\zeta+1=\lh(\Tt)=\lh(\Uu)$. Let
$\hat{\cpmR}=M^\Uu_\zeta$ and $\hat{\construction}=i^\Uu_\zeta(\construction)$.
Let $\hat{\Tt},\hat{\Uu}$ be given
by applying the same construction to
$(\mouseM,(\hat{\cpmR},\hat{\construction}))$. Then $\hat{\Tt}$ is the
non-padded tree $\Tt'$ equivalent to $\Tt$ and $\hat{\Uu}$ is only padding.
\end{cor}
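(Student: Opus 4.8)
The plan is to run the construction of $\hat\Tt,\hat\Uu$ on $(\mouseM,(\hat\cpmR,\hat\construction))$ and prove, by induction on stages, that it retraces $\Tt'$ on the fine side while never using a coarse extender. Write $\zeta'+1=\lh(\Tt')$, and for $\alpha<\zeta'$ let $\beta_\alpha\leq\zeta$ be the index in the final tree $\Tt^\zeta$ of its $\alpha$-th (in increasing order) non-empty extender; set $\beta_{\zeta'}=\zeta$. Then $E^{\Tt'}_\alpha=E^{\Tt^\zeta}_{\beta_\alpha}$ for $\alpha<\zeta'$, and $M^{\Tt'}_\alpha=M^{\Tt^{\beta_\alpha}}_{\beta_\alpha}=M^{\Tt^\zeta}_{\beta_\alpha}$, the last equality because index $\beta_\alpha$ is never retracted (it carries a non-empty extender in $\Tt^\zeta$). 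The inductive claim, for $\alpha\leq\zeta'$, is: $\hat\Tt\rest\alpha+1=\Tt'\rest\alpha+1$; $\hat\Uu\rest\alpha+1$ consists only of padding; and the stage-$\alpha$ descent of the new comparison coincides with the stage-$\beta_\alpha$ descent of the original one.

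The heart of the inductive step is this coincidence of descents. The stage-$\alpha$ descent of the new comparison is the $(A,\hat B^\alpha)$-descent of $(M^{\Tt'}_\alpha,(\hat\cpmR,\hat\construction))$, with $\hat B^\alpha$ the neat code of $(\Tt'\rest\alpha+1,\mathrm{padding})$; the original stage-$\beta_\alpha$ descent is the $(A,B^{\beta_\alpha})$-descent of $(M^{\Tt^{\beta_\alpha}}_{\beta_\alpha},(M^\Uu_{\beta_\alpha},\CC^{\beta_\alpha}))$. These have the same fine model. On all levels and ordinals the original descent inspects — i.e.\ below $\theta_k=\lh(E^{\Tt^\zeta}_{\beta_\alpha})$ — the construction $\hat\construction$ agrees with $\CC^{\beta_\alpha}$: for each $\alpha'\in[\beta_\alpha,\zeta)$ with $E^\Uu_{\alpha'}\neq\emptyset$, non-retraction of $E^{\Tt^\zeta}_{\beta_\alpha}$ forces $M^{\Tt'}_\alpha||\lh(E^{\Tt^\zeta}_{\beta_\alpha})\ins i^{M^\Uu_{\alpha'}}_{E^\Uu_{\alpha'}}(N^{\CC^{\alpha'}}_{\xi_k^{\alpha'}})$, so the relevant level of $\CC^{\alpha'+1}$ still agrees with $M^{\Tt'}_\alpha$ up to $\lh(E^{\Tt^\zeta}_{\beta_\alpha})$; chaining these (and using Claim \ref{clm:agmt}(\ref{a:Ubelow_crit}) to track the index) gives the claim. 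Granting also that the two codes agree on the triples relevant below $\theta_k$ (discussed below), the descents coincide: the original terminated in Case \ref{case:terminal}(i) — not (ii), since $E^{\Tt^\zeta}_{\beta_\alpha}\neq\emptyset$, cf.\ \ref{rem:E^Tt_eps_neq_emptyset} — with $\theta_k=\gamma_k=\lh(E^{\Tt^\zeta}_{\beta_\alpha})$, $M|\theta_k$ active, $N_{\xi_k}|\theta_k$ passive, $(\dagger)_k$ failing, and $F^{M|\theta_k}=E^{\Tt^\zeta}_{\beta_\alpha}$; so the new descent gives the identical data, in particular $F^{N^{\hat\construction}_{\xi_k}|\theta_k}=\emptyset$. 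Hence in the new comparison $E^{\hat\Uu}_\alpha=\emptyset$ and $E^{\hat\Tt^{\alpha+1}}_\alpha=E^{\Tt'}_\alpha$; since no coarse extender is used there is no retraction, and $0$-maximality and normality force $\hat\Tt^{\alpha+1}=\hat\Tt^\alpha\conc\left<E^{\Tt'}_\alpha\right>=\Tt'\rest\alpha+2$, with $\hat\Uu\rest\alpha+2$ still only padding. At a limit $\eta\leq\zeta'$, $\hat\Tt^{<\eta}=\Tt'\rest\eta$ and $\Sigma_\mouseM$ chooses the branch $\Tt'$ uses, while $\Sigma_\cpmR$ extends the padded tree by padding; so the hypothesis persists.

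Termination matches up: at stage $\zeta'$ we have $M^{\Tt'}_{\zeta'}=\I^\Tt=M^{\Tt^\zeta}_\zeta$, $\hat\cpmR=M^\Uu_\zeta$ and $\hat\construction=i^\Uu_\zeta(\construction)=\CC^\zeta$, so the stage-$\zeta'$ descent of the new comparison is, up to the code, literally the final descent of the original comparison, for which $(\dagger)^\zeta$ holds. Granting code-agreement again, $(\dagger)$ holds for the new descent, so the new comparison halts at stage $\zeta'$; by the induction $\hat\Tt=\Tt'$ and $\hat\Uu$ is only padding, as claimed. (If in \ref{i:proof_main1_a} it is (ii) rather than (i) that holds — recall $m=n=0$ — the argument is the same, using \ref{rem:dagger_holds} and Claim \ref{clm:drop}.)

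The step I expect to be the main obstacle is the code-agreement: that $\hat B^\alpha$ and $B^{\beta_\alpha}$ (equivalently, the final codes of the two comparisons) agree on every triple a descent can detect. By Claim \ref{clm:agmt}(\ref{a:<limneat}) the codes $B^\beta$ cohere, and $\hat B^\alpha$ is exactly the ``$i=0$ part'' of the coherent limit; the one discrepancy is that at a limit $\lambda$ of $\Tt$ at which $\Uu\rest\lambda$ is also cofinally non-padded, the original code carries extra ``$i=1$'' triples at $\delta_\lambda$ that $\hat B^\alpha$ lacks. So it suffices to see that no such $\delta_\lambda$ is an ordinal $\sigma$ at which the validity of a relevant segment of $\I^\Tt$ is non-vacuous — i.e.\ that $\delta_\lambda$ is never the natural length of an extender on the sequence of a segment of $\I^\Tt$ above which $\delta_\lambda$ is inaccessible. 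This should fall out of neatness (the $\delta_\gamma$ are strictly increasing) with Claim \ref{clm:agmt}(\ref{a:limUnonpad}),(\ref{a:limUpad}),(\ref{a:Text}): at such $\lambda$ the common value is $M(\Ttvec\rest\lambda)=M^{\Tt^\lambda}_\lambda|\delta_\lambda$, $\delta_\lambda$ is a limit of cardinals of $M^{\Tt^\lambda}_\lambda$, and all later $\Tt$-extenders have length exceeding $\delta_\lambda$, so $\delta_\lambda$ remains a limit cardinal of $\I^\Tt$ at which its sequence is passive. Once this is settled, the remaining points — non-retraction forcing the construction-agreement above, uniqueness of the $0$-maximal normal continuation via $\Sigma_\mouseM$, and stability of padded coarse trees under $\Sigma_\cpmR$ — are routine.
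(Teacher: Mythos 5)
Your proof takes essentially the same route as the paper's: an induction on stages showing $\hat{\Tt}^\alpha=\Tt'\rest\alpha+1$ with $\hat{\Uu}$ pure padding, with the key step being that the new stage-$\alpha$ descent yields the same terminal data as the corresponding original one (so $E^{\hat{\Tt}}_\alpha=E^{\Tt'}_\alpha$ and $E^{\hat{\Uu}}_\alpha=\emptyset$), and with termination inherited from $(\dagger)^\zeta$. The paper phrases the descent-coincidence as $\hat{d}^\alpha=d^\beta$, $\hat{e}^\alpha=e^\zeta\rest\gamma$, $\hat{P}^\alpha=P^\beta$ — note the construction side is matched against the stage-$\zeta$ data, since $\hat{\construction}=\CC^\zeta$, not against $\CC^{\beta_\alpha}$ — whereas you speak of coincidence with the stage-$\beta_\alpha$ descent; the indices of the $e$-side differ between $\CC^{\beta_\alpha}$ and $\CC^\zeta$, so this is a slight imprecision, though your chaining-via-non-retraction argument and appeal to Claim~\ref{clm:agmt}(\ref{a:Ubelow_crit}) is the correct reason the relevant \emph{models} agree, and matches the paper's use of Claim~\ref{clm:agmt}(\ref{a:Text}) (that $M^\Tt_\beta||\lh(E^\Tt_\beta)$ is a cardinal segment of $Q^\zeta$, whence $\hat{Q}^\alpha|\hat{\theta}^\alpha$ is passive).

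The one step where your proposal does not close the gap is the one you yourself flag as the main obstacle: code-agreement. You reduce it to the $i=1$ triples $(1,\delta_\lambda,\gamma)$ present in $B$ but absent from $\hat{B}^\alpha$, and argue $\delta_\lambda$ is a passive limit cardinal of $\I^\Tt$, so that validity at $\sigma=\delta_\lambda$ is vacuous. But that only addresses $\sigma=\delta_\lambda$ itself: for $\sigma>\delta_\lambda$ with $\gamma<\sigma$, those triples lie in $Z\inter\sigma^3$ and so could in principle alter an extender-algebra axiom check at such $\sigma$. You also miss the $i=1$ triples arising at limits $\lambda$ where $\Tt\rest\lambda$ is eventually only padding (so only $\Uu\rest\lambda$ is cofinally non-padded) — such $\lambda$ contribute nothing to $\hat{B}^\alpha$, yet $B$ has $(1,\delta_\lambda,\gamma)$ there. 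The paper is admittedly laconic at exactly this point (it merely notes that $P$ is $(A,B)$-valid, that the neat coding ignores padding, and that $\hat{\Uu}$'s contribution to $\hat{B}^\alpha$ is empty), so your instinct that this is where work is needed is sound, but the argument you supply does not yet finish it.
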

\begin{proof}
We adopt the notation of the proof of \ref{thm:main1} regarding the construction
of $\Tt,\Uu$. Let $\hat{M}^\alpha$, $\hat{R}^\alpha$, etc, be the corresponding 
notation regarding the construction of $\hat{\Tt},\hat{\Uu}$. Note that since
$m=n=0$, Claim \ref{clm:core} of the proof of \ref{thm:main1} is trivial and its
proof does nothing.

\begin{clm*}
For each $\alpha<\lh(\Tt')$, $\hat{\Tt}^\alpha=\Tt'\rest\alpha+1$ and
$\hat{\Uu}\rest\alpha+1$ is pure padding.\end{clm*}

\begin{proof}The proof is by induction on $\alpha$. Suppose it holds for
$\alpha$, and $\lh(\Tt')>\alpha+1$. Let $B$ be the neat code for
$(\Tt,\Uu)$. Let $P=\I^\Tt$ or $P=i^\Uu(N_\conlength^\construction)$, whichever
is smaller. Because $P$ is $(A,B)$-valid, and because of the inductive
hypothesis, and the fact that $\hat{\Tt}^\alpha$ and $\Tt$ are coded (via their
neat codes) in a manner that ignores padding, the proof that
$\hat{\Tt}^{\alpha+1}=\Tt'\rest\alpha+2$ will not break down due to
$(A,\hat{B}^\alpha)$-invalidity. (Since $\hat{\Uu}\rest\alpha+1$ is pure
padding, this portion of $\hat{B}^\alpha$ is also not a problem.)

Now let $\gamma=\lh(E^{\Tt'}_\alpha)$. Let $\beta$ be such that
$E^\Tt_\beta=E^{\Tt'}_\alpha$. Then $\hat{d}^\alpha=d^\beta$
and $\hat{e}^\alpha=e^\zeta\rest\gamma$ and $\hat{P}^\alpha=P^\beta$.
(Recall $\zeta+1=\lh(\Tt)$; see \ref{dfn:descent} for the definition of
$d^\beta$, $e^\zeta$, etc.) This follows by an argument like in the proof of
Claim \ref{clm:agmt}(\ref{a:Ubelow_crit}), combined with the above observations
regarding validity, and using that $M^\Tt_\beta||\lh(E^\Tt_\beta)$ is a cardinal
segment of $\I^\Tt$ and $Q^\zeta$ (by Claim
\ref{clm:agmt}(\ref{a:Text})). Also, $\hat{Q}^\alpha|\hat{\theta}^\alpha$ is
passive since $\hat{Q}^\alpha|\hat{\theta}^\alpha\ins Q^\zeta$. So
$E^{\hat{\Tt}}_\alpha=E^{\Tt'}_\alpha$, as required.

The claim easily follows.
\end{proof}

So we reach stage $\hat{\zeta}$, at which we have
$\hat{M}^{\hat{\zeta}}=M^\zeta$ and $\hat{\Uu}\rest\hat{\zeta}+1$ is pure
padding. But then $(\hat{\dagger})^{\hat{\zeta}}$ holds since
$(\dagger)^\zeta$ does. This completes the proof.\qed \end{proof}

\bibliography{biblio}
\bibliographystyle{plain}

\end{document}